\newcolumntype{C}[1]{>{\centering\arraybackslash$}m{#1}<{$}}
\newcommand{\expm}[2]{#1\mathrm{e}\raisebox{0.3\height}{\scalebox{0.7}{$-$}}#2}
\newcommand{\bmexpm}[2]{\pmb{#1\mathrm{e}\raisebox{0.3\height}{\scalebox{0.7}{$-$}}#2}}
\def\RSthmtxt{theorem~}\newref{thm}{name = \RSthmtxt}}
\def\RSlemtxt{lemma~}\newref{lem}{name = \RSlemtxt}}
\theoremstyle{plain}
\newtheorem{theorem}{\protect\theoremname}[section]
\theoremstyle{definition}
\newtheorem{definition}[theorem]{\protect\definitionname}
\theoremstyle{plain}
\newtheorem{proposition}[theorem]{\protect\propositionname}
\theoremstyle{plain}
\newtheorem{lemma}[theorem]{\protect\lemmaname}
\theoremstyle{plain}
\newtheorem{corollary}[theorem]{\protect\corollaryname}
\theoremstyle{remark}
\newtheorem{claim}[theorem]{\protect\claimname}
\newtheorem{assume}{Assumption}
\providecommand{\claimname}{Claim}
\providecommand{\corollaryname}{Corollary}
\providecommand{\definitionname}{Definition}
\providecommand{\lemmaname}{Lemma}
\providecommand{\propositionname}{Proposition}
\providecommand{\theoremname}{Theorem}
\begin{document}
\title{Well-conditioned Primal-Dual Interior-point Method for Accurate Low-rank Semidefinite
Programming\thanks{Financial support for this work was provided by NSF CAREER Award ECCS-2047462 and ONR Award N00014-24-1-2671.}}
\date{}
\author{Hong-Ming Chiu\\
University of Illinois Urbana-Champaign\\
\texttt{hmchiu2@illinois.edu}\\
\and 
Richard Y. Zhang\\
University of Illinois Urbana-Champaign\\
\texttt{ryz@illinois.edu}}

\maketitle
\global\long\def\sym{\mathrm{sym}}%
\global\long\def\R{\mathbb{R}}%
\global\long\def\S{\mathcal{S}}%
\global\long\def\rank{\mathrm{rank}\,}%
\global\long\def\A{\mathbf{A}}%
\global\long\def\AA{\mathcal{A}}%
\global\long\def\DD{\mathcal{D}}%
\global\long\def\K{\mathcal{K}}%
\global\long\def\bK{\mathbf{K}}%
\global\long\def\N{\mathcal{N}}%
\global\long\def\Feas{\mathcal{F}}%
\global\long\def\E{\mathbf{E}}%
\global\long\def\H{\mathbf{H}}%
\global\long\def\G{\mathbf{G}}%
\global\long\def\D{\mathbf{D}}%
\global\long\def\C{\mathbf{C}}%
\global\long\def\B{\mathbf{B}}%
\global\long\def\P{\mathbf{P}}%
\global\long\def\Q{\mathbf{Q}}%
\global\long\def\M{\mathbf{M}}%
\global\long\def\U{\mathbf{U}}%
\global\long\def\Sig{\mathbf{\Sigma}}%
\global\long\def\UU{\mathcal{U}}%
\global\long\def\J{\mathbf{J}}%
\global\long\def\macheps{\varepsilon_\mathrm{mach}}%
\global\long\def\vector{\operatorname{vec}}%
\global\long\def\svec{\operatorname{vec}_{\S}}%
\global\long\def\skron{\otimes_{\S}}%
\global\long\def\orth{\operatorname{orth}}%
\global\long\def\cond{\operatorname{cond}}%
\global\long\def\diag{\operatorname{diag}}%
\global\long\def\tr{\operatorname{tr}}%
\global\long\def\forall{\text{for all }}%
\global\long\def\colspan{\operatorname{colsp}}%
\global\long\def\eqdef{\overset{\mathrm{def}}{=}}%
\global\long\def\inner#1#2{\left\langle #1,#2\right\rangle }%
\global\long\def\PCG{\operatorname{PCG}}%
\global\long\def\MINRES{\operatorname{MINRES}}%

\begin{abstract}
We describe how the low-rank structure in an SDP can be exploited to reduce the per-iteration cost of a convex primal-dual interior-point method down to $O(n^{3})$ time and $O(n^{2})$ memory, even at very high accuracies. A traditional difficulty is the dense Newton subproblem at each iteration, which becomes progressively ill-conditioned as progress is made towards the solution. Preconditioners have been proposed to improve conditioning, but these can be expensive to set up, and fundamentally become ineffective at high accuracies, as the preconditioner itself becomes increasingly ill-conditioned. Instead, we present a \emph{well-conditioned reformulation} of the Newton subproblem that is cheap to set up, and whose condition number is guaranteed to remain bounded over all iterations of the interior-point method. In theory, applying an inner iterative method to the reformulation reduces the per-iteration cost of the outer interior-point method to $O(n^{3})$ time and $O(n^{2})$ memory. We also present a \emph{well-conditioned preconditioner} that theoretically increases the outer per-iteration cost to $O(n^{3}r^{3})$ time and $O(n^{2}r^{2})$ memory, where $r$ is an upper-bound on the solution rank, but in practice greatly improves the convergence of the inner iterations. 
\end{abstract}

\section{Introduction}
Given problem data $\AA:\S^{n}\to\R^{m}$ and $b\in\R^{m}$ and $C\in\S^{n}$,
we seek to solve the standard-form semidefinite program (SDP) \begin{subequations}\label{eq:sdp-pd}
\begin{align}
X^{\star} & =\arg\min_{X\in\S^{n}}\{\inner CX:\AA(X)=b,\quad X\succeq0\},\label{eq:sdp}\\
y^{\star},S^{\star} & =\arg\max_{S\in\S^{n},y\in\R^{m}}\{\inner by:\AA^{T}(y)+S=C,\quad S\succeq0\},\label{eq:sdd}
\end{align}
assuming that the primal problem (\ref{eq:sdp}) admits a \emph{unique
low-rank solution}
\begin{equation}
X^{\star}\text{ is unique, }\quad r^{\star}\eqdef\rank(X^{\star})\ll n,\label{eq:sdp-asm-p}
\end{equation}
and that the dual problem (\ref{eq:sdd}) admits a \emph{strictly
complementary} \emph{solution} (not necessarily unique)
\begin{equation}
\text{exists }S^{\star}\text{ such that }\rank(S^{\star})=n-r^{\star}.\label{eq:sdp-asm-d}
\end{equation}
\end{subequations}Here, $\S^{n}$ denotes the space of $n\times n$
real symmetric matrices with the inner product $\langle A,B\rangle=\tr(AB)$,
and $A\succeq B$ signifies that $A-B$ is positive semidefinite.
We assume without loss of generality that the constraints represented
by $\AA$ are linearly independent, meaning $\AA^{T}(y)=0$ implies
$y=0$. This assumption restricts the number of constraints to $m\le\frac{1}{2}n(n+1)$.
Both of these conditions are mild and are widely regarded as standard
assumptions. 

Currently, the low-rank SDP (\ref{eq:sdp-pd}) is most commonly approached
using the nonconvex Burer--Monteiro factorization~\cite{burer2003nonlinear},
which is to factorize $X=UU^{T}$ into an $n\times r$ low-rank factor
$U$, where $r\ge r^{\star}$ is a search rank parameter, and then
to locally optimize over $U$. While this can greatly reduce the number
of optimization variables, from $O(n^{2})$ down to as low as $O(n)$,
the loss of convexity can create significant convergence difficulties.
A basic but fundamental issue is the possibility of failure by getting
stuck at a spurious local minimum over the low-rank factor $U$~\cite{boumal2020deterministic}.
Even ignoring this potential failure mode, it can still take a second-order
method like cubic regularization or a trust-region method up to $O(\epsilon^{-3/2})$
iterations to reach $\epsilon$ second-order optimality over $U$,
where a primal-solution $(X,y,S)$ with $\epsilon$ duality gap can
(hopefully) be extracted~\cite{boumal2020deterministic}. In practice,
the Burer--Monteiro approach works well for some problems~\cite{boumal2016nonconvex,rosen2019se,chiu2023tight},
but as shown in Fig.~\ref{fig:main}, can be slow and unreliable
for other problems, particularly those with underlying nonsmoothness.

\begin{figure}[!t]
    \center 
    \includegraphics[width=1\textwidth]{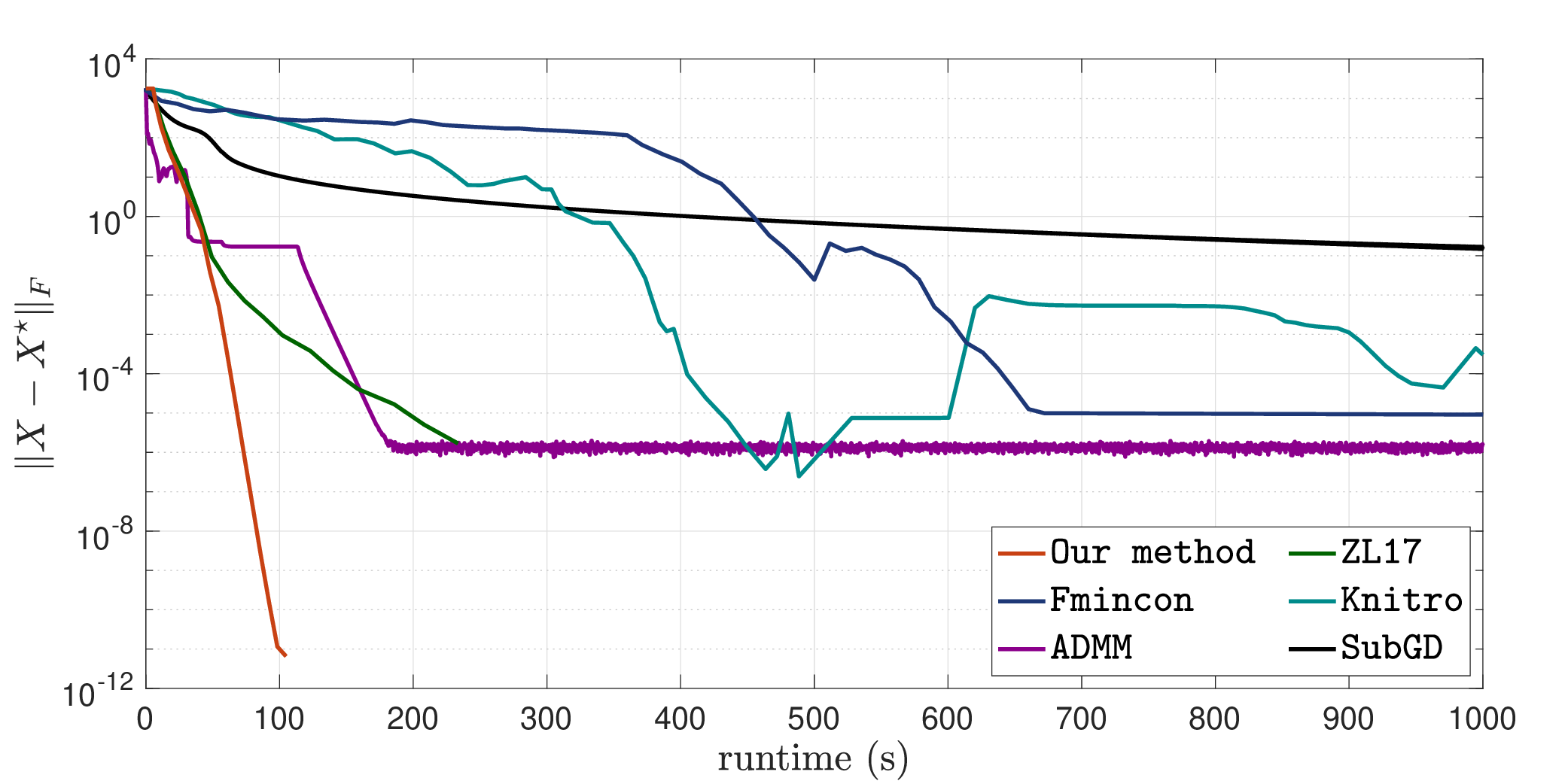}
    \caption{Error $\|X-X^{\star}\|_{F}$ against runtime for solving trace-regularized linear regression as an instance of (\ref{eq:sdp-pd}) using our proposed method (\texttt{Our method}); spectrally preconditioned interior-point method by Zhang and Lavaei~\cite{zhang2017modified} (\texttt{ZL17}); the Burer-Monteiro method, implemented with nonlinear programming solvers \texttt{Fmincon} \cite{matlabOTB} and \texttt{Knitro}~\cite{byrd2006k}, and equipped with analytic Hessians; alternating direction method of multipliers (\texttt{ADMM}); and subgradient method (\texttt{SubGD}). (See Section~\ref{sec:comparison} for details.)}\label{fig:main} 
\end{figure}

In this paper, we revisit classical primal-dual interior-point methods
(IPMs), because they maintain the convexity of the SDP (\ref{eq:sdp-pd}),
and therefore enjoy simple and rapid global convergence to $\epsilon$
duality gap in $O(\sqrt{n}\log(1/\epsilon))$ iterations. IPMs are
second-order methods that solve a sequence of logarithmic penalized
SDPs of the form 
\begin{align*}
X(\mu) & =\arg\min_{X\in\S^{n}}\{\inner CX-\mu\log\det(X):\AA(X)=b\},\\
y(\mu),S(\mu) & =\arg\max_{S\in\S^{n},y\in\R^{m}}\{\inner by+\mu\log\det(S):\AA^{T}(y)+S=C\},
\end{align*}
while progressively decreasing the duality gap parameter $\mu>0$
after each iteration. Using an IPM, the cost of solving (\ref{eq:sdp-pd})
is almost completely determined by the cost of solving the Newton
subproblem at each iteration\begin{subequations}\label{eq:newt}
\begin{gather}
\min_{\Delta X\in\S^{n}}\left\{ \inner{\tilde{C}}{\Delta X}+\frac{1}{2}\|W^{-1/2}(\Delta X)W^{-1/2}\|_{F}^{2}:\AA(\Delta X)=\tilde{b}\right\} ,\\
\max_{\Delta S\in\S^{n},\Delta y\in\R^{m}}\left\{ \inner{\tilde{b}}{\Delta y}-\frac{1}{2}\|W^{1/2}(\Delta S)W^{1/2}\|_{F}^{2}:\AA^{T}(\Delta y)+\Delta S=\tilde{C}\right\} ,
\end{gather}
in which the scaling point $W\in\S_{++}^{n}$ and the residuals $\tilde{b}\in\R^{m},\tilde{C}\in\S^{n}$
are iteration- and algorithm-dependent. In fact, using a short-step
strategy (with fixed step-sizes)~\cite[Theorem~6.1]{nesterov1998primal},
solving the SDP (\ref{eq:sdp-pd}) to $\epsilon$ duality gap costs
exactly the same as solving an instances
of (\ref{eq:newt}) to $\epsilon$ accuracy. By paying some small
per-iteration overhead, long-step strategies (with adaptive step-sizes)
can consistently reduce the iteration count down to 50-100 in practice~\cite{vandenberghe1996semidefinite,sturm2002implementation}. 

Instead, the traditional difficulty faced by IPMs is the high cost
of accurately solving the Newton subproblem (\ref{eq:newt}) at each
iteration. This difficulty arises due to need to use a scaling point
$W\in\S_{++}^{n}$ that is both \emph{dense} and becomes increasing\emph{
ill-conditioned} with decreasing values of $\mu$. Although our technique
is more broadly applicable, we will focus this paper on the primal-dual
Nesterov--Todd scaling~\cite{nesterov1997self,nesterov1998primal},
which is computed from the current iterates $X,S$ as follows 
\begin{equation}
W=X^{1/2}(X^{1/2}SX^{1/2})^{-1/2}X^{1/2}=[S^{1/2}(S^{1/2}XS^{1/2})^{-1/2}S^{1/2}]^{-1},
\end{equation}
\end{subequations}and appears in popular general-purpose IPM solvers
like SeDuMi~\cite{sturm1999sedumi}, SDPT3~\cite{toh1999sdpt3},
and MOSEK~\cite{mosek2015}. The ill-conditioning in $W$ makes it
increasingly difficult to solve (\ref{eq:newt}) to sufficiently high
accuracy for the outer IPM to continue making progress. Therefore,
most IPM solvers, like those listed above, solve (\ref{eq:newt})
by directly forming and factorizing its Karush--Kuhn--Tucker equations.
Unfortunately, the density of $W$ renders these equations dense,
irrespective of any sparsity in the data $C$ and $\AA$. For SDPs
with $m=\Theta(n^{2})$ constraints, the cost of solving (\ref{eq:newt})
by the direct approach is $\Theta(n^{6})$ time and $\Theta(n^{4})$
memory, hence limiting $n$ to no more than about 200. 

In this paper, we instead solve (\ref{eq:newt}) using a preconditioned
iterative approach, as a set of inner iterations within the outer
IPM iterations. Our main contribution is to use the low-rank structure
in (\ref{eq:sdp-pd}) to provably reduce the cost of computing a \emph{numerically
exact} solution to (\ref{eq:newt}) to $O(n^{3}r^{3}\log(1/\macheps)))$
time and $O(n^{2}r^{2})$ memory, where $r\ge r^{\star}$ is a search
rank parameter as before, and $\macheps$ is the machine precision.
Critically, these complexity figures hold even for SDPs with $m=\Theta(n^{2})$
constraints. This matches the per-iteration costs of second-order
implementations of the Burer--Monteiro method, but we additionally
retain the $O(\sqrt{n}\log(1/\epsilon))$ iteration bound enjoyed
by classical IPMs. As we now explain, our main challenge is being
able to maintain these complexity figures even at high accuracies,
where $\mu$ becomes very small, and $W$ becomes extremely ill-conditioned.

\subsection{Prior work: Spectral preconditioning}

By taking advantage of fast matrix-vector products with the data $\AA$,
an iterative approach can reduce the cost of solving the Newton subproblem
(\ref{eq:newt}) to as low as $O(n^{3})$ time and $O(n^{2})$ memory.
However, without an effective preconditioner, these improved complexity
figures are limited to coarse accuracies of $\mu\approx10^{-2}$.
The key issue is that the scaling point $W$ becomes ill-conditioned
as $\cond(W)=\Theta(1/\mu)$. Iterating until the numerical floor
yields a \emph{numerically exact} solution to (\ref{eq:newt}) that
is indistinguishable from the true solution under finite precision,
but the number of iterations to achieve this grows as $O(1/\mu\log(1/\macheps))$.
Truncating after a much smaller number of iterations yields an \emph{inexact}
solution to (\ref{eq:newt}) that can slow the convergence of the
outer IPM. 

Toh and Kojima \cite{toh2002solving} proposed the first effective
preconditioner for SDPs. Their critical insight is the empirical observation
that the scaling point $W\in\S_{++}^{n}$ becomes ill-conditioned
as $\cond(W)=\Theta(1/\mu)$ only because its eigenvalues split into
two well-conditioned clusters:
\begin{equation}
W=\underbrace{Q\Lambda Q^{T}}_{\text{top }r^{\star}\text{ eigenvalues}}+\underbrace{Q_{\perp}\Lambda_{\perp}Q_{\perp}^{T},}_{\text{bottom }n-r^{\star}\text{ eigenvalues}}\,\text{where }\Lambda=\Theta(1/\sqrt{\mu})\text{ and }\Lambda_{\perp}=\Theta(\sqrt{\mu}).\tag{A0}\label{eq:A0}
\end{equation}
Based on this insight, they formulated a \emph{spectral preconditioner}
that attempts to counteract the ill-conditioning between the two clusters,
and demonstrated promising computational results. However, the empirical
effectiveness was not rigorously guaranteed, and its expensive setup
cost of $O(mn^{3})$ time made its overall solution time comparable
to a direct solution of (\ref{eq:newt}), particularly for problems
with $m=\Theta(n^{2})$ constraints. 

Inspired by the above, Zhang and Lavaei~\cite{zhang2017modified}
proposed a spectral preconditioner that is much cheaper to set up,
and also rigorously guarantees a high quality of preconditioning under
the splitting assumption (\ref{eq:A0}). Their key idea is to rewrite
the scaling point as the low-rank perturbation of a well-conditioned
matrix, and then to approximate the well-conditioned part by the identity
matrix
\[
W=\quad\underbrace{Q(\Lambda-\tau)Q^{T}}_{\text{low-rank}}\quad+\quad\tau\cdot\underbrace{(QQ^{T}+\tau^{-1}Q_{\perp}\Lambda_{\perp}Q_{\perp}^{T})}_{\text{well-conditioned}}\quad\approx\quad UU^{T}\quad+\quad\tau I.
\]
Indeed, it is easy to verify that the joint condition number between
$W$ and $UU^{T}+\tau I$ is bounded under (\ref{eq:A0}) even as
$\mu\to0^{+}$. Substituting $W\approx UU^{T}+\tau I$ into (\ref{eq:newt})
reveals a low-rank perturbed problem that admits a closed-form solution
via the Sherman--Morrison--Woodbury formula, which they proposed
to use as a preconditioner. Assuming (\ref{eq:A0}) and the availability
of fast matrix-vector products that we outline as (\ref{eq:A3}) below,
they proved that their method computes a numerically exact solution
in $O(n^{3}r^{3}+n^{3}r^{2}\log(1/\macheps))$ time and $O(n^{2}r^{2})$
memory. This same preconditioning idea was later adopted in the solver
\texttt{Loraine}~\cite{habibi2023loraine}.

In practice, however, the Zhang--Lavaei preconditioner loses its
effectiveness for smaller values of $\mu$, until abruptly failing
at around $\mu\approx10^{-6}$. This is an inherent limitation of
preconditioning in finite-precision arithmetic; to precondition a
matrix with a diverging condition number $\Theta(1/\mu^{2})$, the
preconditioner's condition number must also diverge as $\Theta(1/\mu^{2})$.
As $\mu$ approaches zero, it eventually becomes impossible to solve
the preconditioner accurately enough for it to remain effective. This
issue is further exacerbated by the use of the Sherman--Morrison--Woodbury
formula, which is itself notorious for numerical instability. 

Moreover, the effectiveness of both prior preconditioners critically
hinges on the splitting assumption (\ref{eq:A0}), which lacks a rigorous
justification. It was informally argued in \cite{toh2002solving,toh2004solving}
that the assumpution holds under strict complementarity (\ref{eq:sdp-asm-d})
and the \emph{primal-dual nondegeneracy} of Alizadeh, Haeberly, and
Overton~\cite{alizadeh1997complementarity}. But primal-dual nondegeneracy
is an excessively strong assumption, as it would require the number
of constraints $m$ to be within the range of
\[
\frac{1}{2}r^{\star}(r^{\star}+1)\le m\le nr^{\star}-\frac{1}{2}r^{\star}(r^{\star}-1).
\]
In particular, it is not applicable to the diverse range of low-rank
matrix recovery problems, like matrix sensing~\cite{recht2010guaranteed,candes2011tight},
matrix completion~\cite{candes2009exact}, phase retrieval~\cite{candes2013phaselift},
that necessarily require $m>nr^{\star}-\frac{1}{2}r^{\star}(r^{\star}-1)$
measurements in order to uniquely recover an underlying rank-$r^{\star}$
ground truth. It is also not applicable to problems with $m=\Theta(n^{2})$
constraints, which as we explained above, are the most difficult SDPs
to solve for a fixed value of $n$. 

\subsection{This work: Well-conditioned reformulation and preconditioner}

In this paper, we derive a \emph{well-conditioned reformulation} of
the Newton subproblem (\ref{eq:newt}) that costs just $O(n^{3})$
time and $O(n^{2})$ memory to set up. In principle, as the reformulation
is already well-conditioned by construction, it allows any iterative
method to compute a numerically exact solution in $O(\log(1/\macheps))$
iterations for all values of $0<\mu\le1$. In practice, the convergence
rate can be substantially improved by the use of a \emph{well-conditioned
preconditioner}. Assuming (\ref{eq:A0}) and the fast matrix-vector
products in (\ref{eq:A3}) below, we provably compute a numerically
exact solution in $O(n^{3}r^{3}+n^{3}r^{2}\log(1/\macheps))$ time
and $O(n^{2}r^{2})$ memory. But unlike Zhang and Lavaei~\cite{zhang2017modified},
our well-conditioned preconditioner maintains its full effectiveness
even at extremely high accuracies like $\mu\approx10^{-12}$, hence
fully addressing a critical weakness of this prior work. 

Our main theoretical results are a tight upper bound on the condition number
of the reformulation and the preconditioner (\thmref{main}), and
a tight bound on the number of preconditioned iterations needed to
solve (\ref{eq:newt}) to $\epsilon$ accuracy (\corref{pcg}). These
rely on the same splitting assumption (\ref{eq:A0}) as prior work,
which we show is implied by a strengthened version of strict complementarity
(\ref{eq:sdp-asm-d}).  In \lemref{eigsplit}, we prove that (\ref{eq:A0})
holds if the primal-dual iterates $(X,S)$ used to compute the scaling
point $W$ in (\ref{eq:newt}) satisfy the following centering condition
(the spectral norm $\|\cdot\|$ is defined as the maximum singular
value):
\begin{equation}
\left\Vert \frac{1}{\mu}X^{1/2}SX^{1/2}-I\right\Vert <1,\qquad\|X-X^{\star}\|=O(\mu),\qquad\|S-S^{\star}\|=O(1).\tag{A1}\label{eq:A1}
\end{equation}
In particular, the restrictive primal-dual nondegeneracy assumption
adopted by prior work \cite{toh2002solving,toh2004solving} is completely
unnecessary. By repeating the algebraic derivations in \cite{luo1998superlinear},
it can be shown that (\ref{eq:A1}) holds under strict complementarity
(\ref{eq:sdp-asm-d}) if the IPM maintains its primal-dual iterates
$(X,S)$ within the ``wide'' neighborhood $\N_{\infty}$ \cite{wright1997primal,nesterov1997self,benson2000solving}.
In turn, most IPMs that achieve an $\epsilon$ duality gap in $O(\sqrt{n}\log(1/\epsilon))$
iterations---including essentially all theoretical short-step methods
\cite{nesterov1997self,nesterov1998primal,sturm1999symmetric} but
also many practical methods like the popular solver SeDuMi \cite{sturm1999sedumi}---do
so by keeping their iterates within the $\N_{\infty}$ neighborhood
\cite{sturm2002implementation}. In Section~\ref{sec:verify_thm},
we experimentally verify for SDPs satisfying strict complementarity
(\ref{eq:sdp-asm-d}) that SeDuMi~\cite{sturm1999sedumi} keeps its
primal-dual iterates $(X,S)$ sufficiently centered to satisfy (\ref{eq:A1}).

Under (\ref{eq:A1}), which implies the splitting assumption (\ref{eq:A0})
adopted in prior work via \lemref{eigsplit}, we prove that $O(\log(1/\epsilon))$
preconditioned iterations solve (\ref{eq:newt}) to $\epsilon$ accuracy
for all values of $0<\mu\le1$. In order for the reformulation and
the preconditioner to remain well-conditioned, however, we require
a strengthened version of the primal uniqueness assumption (\ref{eq:sdp-asm-p}).
Concretely, we require the linear operator $\AA$ to be injective
with respect to the tangent space of the manifold of rank-$r^{\star}$
positive semidefinite matrices, evaluated at the unique primal solution
$X^{\star}=U^{\star}U^{\star T}$:
\begin{gather}
\AA(U^{\star}V^{T}+VU^{\star T})=0\quad\iff\quad U^{\star}V^{T}+VU^{\star T}=0.\tag{A2}\label{eq:A2}
\end{gather}
This is stronger than primal uniqueness because it is possible for
a rank-$r^{\star}$ primal solution to be unique with just $m=\frac{1}{2}r^{\star}(r^{\star}+1)$
constraints~\cite{alizadeh1997complementarity}, but (\ref{eq:A2})
can hold only when the number of constraints $m$ is no less than
the dimension of the tangent space 
\[
m\ge\frac{1}{2}r^{\star}(r^{\star}+1)+r^{\star}(n-r^{\star})=nr^{\star}-\frac{1}{2}r^{\star}(r^{\star}-1).
\]
Fortunately, many rigorous proofs of primal uniqueness, such as for
matrix sensing~\cite{recht2010guaranteed,candes2011tight}, matrix
completion~\cite{candes2009exact}, phase retrieval~\cite{candes2013phaselift},
actually work by establishing (\ref{eq:A2}). In problems where (\ref{eq:A2})
does not hold, it becomes possible for our reformulation and preconditioner
to grow ill-conditioned in the limit $\mu\to0^{+}$. For these cases,
the behavior of our method matches that of Zhang and Lavaei~\cite{zhang2017modified};
down to modest accuracies of $\mu\approx10^{-6}$, our preconditioned
iterations will still rapidly converge to $\epsilon$ accuracy in
$O(\log(1/\epsilon))$ iterations.

Our time complexity figures require the same fast matrix-vector products
as previously adopted by Zhang and Lavaei~\cite{zhang2017modified}
(and implicitly adopted by Toh and Kojima~\cite{toh2002solving}).
We assume, for $U,V\in\R^{n\times r}$ and $y\in\R^{m}$, that the
two matrix-vector products 
\begin{equation}
\AA(UV^{T}+VU^{T})=\left[\inner{A_{i}}{UV^{T}+VU^{T}}\right]_{i=1}^{m},\quad\AA^{T}(y)U=\sum_{i=1}^{m}y_{i}A_{i}U\tag{A3}\label{eq:A3}
\end{equation}
can both be performed in $O(n^{2}r)$ time and $O(n^{2})$ storage.
This is satisfied in problems with suitable sparsity
and/or Kronecker structure. In Section~\ref{sec:verify_thm}, we
rigorously prove that the robust matrix completion and sensor network
localization problems satisfy (\ref{eq:A3}) due to suitable sparsity
in the operator $\AA$.

\subsection{Related work}

Our well-conditioned reformulation of the Newton subproblem (\ref{eq:newt})
is inspired by Toh~\cite{toh2004solving} and shares superficial
similarities. Indeed, the relationship between our reformulation and
the Zhang--Lavaei preconditioner~\cite{zhang2017modified} has an
analogous relationship to Toh\textquoteright s reformulation with
respect to the Toh--Kojima preconditioner~\cite{toh2002solving}.
However, the actual methods are very different. First, Toh's reformulation
remains well-conditioned only under primal-dual nondegeneracy, which
as we explained above, makes it inapplicable to most low-rank matrix
recovery problems, as well as problems with $m=\Theta(n^{2})$ constraints.
Second, the effectiveness of Toh's preconditioner is not rigorously
guaranteed; due to its reliance on the PSQMR method, the Faber--Manteuffel
Theorem~\cite{faber1984necessary} says that it is fundamentally
impossible to prove convergence for the preconditioned iterative method.
Third, our set-up cost of $O(n^{3}r^{3})$ is much cheaper than Toh's
set-up cost of $O(mn^{3})$; this parallels the reduction in set-up
cost achieved by the Zhang--Lavaei preconditioner~\cite{zhang2017modified}
over the Toh--Kojima preconditioner~\cite{toh2002solving}.

We mention another important line of work on the use of partial Cholesky
preconditioners \cite{bergamaschi2004preconditioning,gondzio2012matrix,bellavia2013matrix},
which was extended to IPMs for SDPs in~\cite{bellavia2019inexact}.
These preconditioners are based on the insight that a small number
of ill-conditioned dimensions in the Newton subproblem (\ref{eq:newt})
can be corrected by a low-rank partial Cholesky factorization of its
underlying Schur complement. However, to the best of our knowledge,
such preconditioners do not come with rigorous guarantees of an improvement. 

Outside of SDPs, our reformulation is also inspired by the layered
least-squares of Bobrovnikova and Vavasis~\cite{bobrovnikova2001accurate};
indeed, our method can be viewed as the natural SDP generalization.
However, an important distinction is that they apply MINRES directly
to solve the indefinite reformulation, whereas we combine PCG with
an indefinite preconditioner to dramatically improve the convergence
rate. This idea of using PCG to solve indefinite equations with an
indefinite preconditioner had first appeared in the scientific computing
literature~\cite{lukvsan1998indefinitely,gould2000iterative,rozloznik2002krylov}.

We mention that first-order methods have also been derived to exploit
the low-rank structure of the SDP (\ref{eq:sdp-pd}) without giving
up the convexity~\cite{cai2010singular,yurtsever2021scalable,yang2023inexact}.
These methods gain their efficiency by keeping all their iterates
low-rank, and by performing convex updates on the low-rank iterates
$X=UU^{T}$ while maintaining them in low-rank factored form $X_{+}=U_{+}U_{+}^{T}$.
While convex first-order methods cannot become stuck at a spurious
local minimum, they still require $\mathrm{poly}(1/\epsilon)$ iterations
to converge to an $\epsilon$-accurate solution of (\ref{eq:sdp-pd}).
In contrast, our proposed method achieves the same in just $O(\log(1/\epsilon))$
iterations, for an exponential factor improvement.

Finally, for low-rank SDPs with small treewidth sparsity, chordal conversion methods \cite{fukuda2001exploiting,kim2011exploiting}
compute an $\epsilon$ accurate solution in guaranteed $O(n^{1.5}\log(1/\epsilon))$
time and $O(n)$ memory~\cite{zhang2021sparse,zhang2023parameterized}.
Where the property holds, chordal conversion methods achieve state-of-the-art
speed and reliability. Unfortunately, many real-world problems do
not enjoy small treewidth sparsity~\cite{maniu2019experimental}.
For low-rank SDPs that are sparse but non-chordal, our proposed method
can solve them in as little as $O(n^{3.5}\log(1/\epsilon))$ time
and $O(n^{2})$ memory.

\subsection*{Notations}
Write $\R^{n\times n}\supseteq\S^{n}\supseteq\S_{+}^{n}\supseteq\S_{++}^{n}$ as the sets of $n\times n$ real matrices, real symmetric
matrices, positive semidefinite matrices, and positive definite matrices, respectively.
Write $\vector:\R^{n\times n}\to\R^{n^{2}}$ as the column-stacking
vectorization, and $A\otimes B$ as the Kronecker product satisfying $(B\otimes A)\vector(X)=\vector(AXB^{T})$. Write $\|\cdot\|$ and $\|\cdot\|_F$ as spectral norm and Frobenius norm, respectively. Let $\Psi_{n}$ denote the $n^{2}\times n(n+1)/2$ basis for the set of vectorized real symmetric matrices $\vector(\S^{n})\subseteq\vector(\R^{n\times n})$. We define the \emph{symmetric vectorization} of $X$ as 
\[
\svec(X)\eqdef[X_{11},\sqrt{2}X_{21},\dots,\sqrt{2}X_{n1},X_{22},\sqrt{2}X_{32},\dots,\sqrt{2}X_{n2},\dots,X_{nn}]^T.
\]
We note that $\inner XS=\inner{\svec(X)}{\svec(S)}=\inner{\vector(X)}{\vector(S)}$ for all $X,S\in\S^{n}$.
Like the identity matrix, we will frequently suppress the subscript
$n$ if the dimensions can be inferred from context. Given $A,B\in\R^{n\times r}$, we define their \emph{symmetric Kronecker product} as $A\skron B\eqdef\frac{1}{2}\Psi_{n}^{T}(A\otimes B+B\otimes A)\Psi_{r}$
so that $(A\skron B)\svec(X)=(B\skron A)\svec(X)=\svec(AXB^{T})$ holds for all $X\in\S^{r}$.
(See also~\cite{alizadeh1997complementarity,toh2002solving,toh2004solving}
for earlier references on the symmetric vectorization and Kronecker
product.) 

\section{Background: Iterative solvers for symmetric indefinite equations}

\subsection{MINRES}

Given a system of equations $Ax=b$ with a symmetric $A$ and right-hand
side $b$, we define the corresponding minimum residual (MINRES) iterations
with respect to positive definite preconditioner $P$, and initial
point $x_{0}$ implicitly in terms of its Krylov optimality condition. 
\begin{definition}[Minimum residual]
Given $A\in\S^{n}$ and $P\in\S_{++}^{n}$ and $b,x_{0}\in\R^{n}$,
we define $\MINRES_{k}(A,b,P,x_{0})\eqdef P^{1/2}y_{k}$ where
\[
y_{k}=\arg{\textstyle \min_{y}}\{\|\tilde{A}y-\tilde{b}\|:y\in y_{0}+\mathrm{span}\{\tilde{b},\tilde{A}\tilde{b},\dots,\tilde{A}^{k-1}\tilde{b}\}\}
\]
and $\tilde{A}=P^{-1/2}AP^{-1/2}$ and $\tilde{b}=P^{-1/2}b$ and
$y_{0}=P^{-1/2}x_{0}$. 
\end{definition}

The precise implementation details of MINRES are complicated, and
can be found in the standard reference~\cite[Algorithm 4]{greenbaum1997iterative}.
We only note that the method uses $O(n)$ memory over all iterations,
and that each iteration costs $O(n)$ time plus the the matrix-vector
product $p\mapsto Ap$ and the preconditioner linear solve $r\mapsto P^{-1}r$.
In exact arithmetic, MINRES terminates with the exact solution in
$n$ iterations. With round-off noise, however, exact termination
does not occur. Instead, the behavior of the iterates are better predicted
by the following bound.
\begin{proposition}[Symmetric indefinite]
\label{prop:minres}Given $A\in\S^{n}$ and $P\in\S_{++}^{n}$ and
$b,x_{0}\in\R^{n}$, let $x_{k}=\MINRES_{k}(A,b,P,x_{0}).$ Then,
we have $\|Ax_{k}-b\|\le\epsilon$ in at most
\[
k\le\left\lceil \frac{1}{2}\kappa\log\left(\frac{2\|Ax^{0}-b\|}{\epsilon}\right)\right\rceil \text{ iterations}
\]
where $\kappa=\cond(P^{-1/2}AP^{-1/2})$. 
\end{proposition}

This paper proposes a well-conditioned reformulation $Ax=b$ for the
Newton subproblem (\ref{eq:newt}), whose condition number $\kappa=\cond(A)$
remains bounded over all iterations of the IPM. In principle, \propref{minres}
says that it takes $O(\log(1/\epsilon))$ iterations to solve this
reformulation to $\epsilon$ accuracy.

\subsection{PCG}

Given a system of equations $Ax=b$ with a symmetric $A$ and right-hand
side $b$, we define the corresponding preconditioned conjugate gradient
(PCG) iterations with respect to preconditioner $P$ not necessarily
positive definite, and initial point $x_{0}$ explicitly in terms
of its iterations.
\begin{definition}[Preconditioned conjugate gradients]
\label{def:pcg}\sloppy Given $A,P\in\S^{n}$ and $b,x_{0}\in\R^{n}$,
we define $\PCG_{k}(A,b,P,x_{0})\eqdef x_{k}$ where for $j\in\{0,1,2,\dots,k\}$
\begin{gather*}
x_{j+1}=x_{j}+\alpha_{j}p_{j},\quad Pz_{j+1}=r_{j+1}=r_{j}-\alpha_{j}Ap_{j},\quad p_{j+1}=z_{j+1}+\beta_{j}p_{j},\\
\alpha_{j}=\inner{r_{j}}{z_{j}}/\inner{p_{j}}{Ap_{j}},\qquad\beta_{j}=\inner{r_{j+1}}{z_{j}}/\inner{r_{j}}{z_{j}},
\end{gather*}
and $Pz_{0}=r_{0}=b-Ax_{0}$ and $p_{0}=z_{0}$.
\end{definition}

We can verify from \defref{pcg} that the method uses $O(n)$ memory
over all iterations, and that each iteration costs $O(n)$ time plus
the the matrix-vector product $p\mapsto Ap$ and the preconditioner
linear solve $r\mapsto P^{-1}r$. The following is a classical iteration
bound for when PCG is used to solve a symmetric positive definite
system of equations with a positive definite preconditioner. 
\begin{proposition}[Symmetric positive definite]
\label{prop:pcg}Given $A,P\in\S_{++}^{n}$ and $b,x_{0}\in\R^{n}$,
let $x_{k}=\PCG_{k}(A,b,P,x_{0})$. Then, both $\|Ax_{k}-b\|\le\epsilon$
holds in at most
\[
k\le\left\lceil \frac{1}{2}\sqrt{\kappa}\log\left(\frac{2\sqrt{\kappa}\|Ax_{0}-b\|}{\epsilon}\right)\right\rceil \text{ iterations}
\]
where $\kappa=\cond(P^{-1/2}AP^{-1/2})$. 
\end{proposition}

In the late 1990s and early 2000s, it was pointed out in several parallel
works~\cite{lukvsan1998indefinitely,gould2000iterative,rozloznik2002krylov}
that PCG can also be used to solve \emph{indefinite} systems, with
the help of an indefinite preconditioner. The proof of \propref{indefpcg}
follows immediately by substituting into \defref{pcg}.
\begin{proposition}[Indefinite preconditioning]
\label{prop:indefpcg}Given $A,P\in\S_{++}^{n}$ and $C\in\S_{++}^{r}$
and $B\in\R^{n\times r}$ and $x_{0},f\in\R^{n}$ and $g\in\R^{r}$,
let\begin{subequations}
\begin{gather}
\begin{bmatrix}u_{k}\\
v_{k}
\end{bmatrix}=\mathrm{PCG}_{k}\left(\begin{bmatrix}A & B\\
B^{T} & -C
\end{bmatrix},\begin{bmatrix}f\\
g
\end{bmatrix},\begin{bmatrix}P & B\\
B^{T} & -C
\end{bmatrix},\begin{bmatrix}x_{0}\\
C^{-1}(B^{T}x_{0}-g)
\end{bmatrix}\right),\label{eq:indef1}\\
x_{k}=\mathrm{PCG}_{k}(A+BC^{-1}B^{T},f+BC^{-1}g,P+BC^{-1}B^{T},x_{0}).\label{eq:indef2}
\end{gather}
\end{subequations}In exact arithmetic, we have $u_{k}=x_{k}$ and
$v_{k}=C^{-1}(B^{T}x_{k}-g)$ for all $k\ge0$.
\end{proposition}

\begin{proof}
Let $\alpha_{k},p_{k}$ and $\tilde{\alpha}_{k},\tilde{p}_{k}$ denote
the internal iterates generated by PCG on (\ref{eq:indef1}) and (\ref{eq:indef2}),
respectively. Assuming $u_{k}=x_{k}$ and $v_{k}=C^{-1}(B^{T}x_{k}-g)$,
one can verify from \defref{pcg} that $\alpha_{k}=\tilde{\alpha}_{k}$, $p_{k}=\left[\begin{smallmatrix}I\\C^{-1}B^{T} &\ C^{-1}\end{smallmatrix}\right]
\left[\begin{smallmatrix}\tilde{p}_{k}\\0^{\vphantom{1}}\end{smallmatrix}\right]$
and
\begin{align*}
\begin{bmatrix}u_{k+1}\\
v_{k+1}
\end{bmatrix}=\begin{bmatrix}I\\
C^{-1}B^{T} &\ C^{-1}
\end{bmatrix}\begin{bmatrix}x_{k}+\tilde{\alpha}_{k}\tilde{p}_{k}\\
-g
\end{bmatrix}=\begin{bmatrix}x_{k+1}\\
C^{-1}(B^{T}x_{k+1}-g)
\end{bmatrix}.
\end{align*}
With $u_{0}=x_{0}$ and $v_{0}=C^{-1}(B^{T}x_{0}-g)$,
the desired result follows by induction. 
\end{proof}
Therefore, the indefinite preconditioned PCG (\ref{eq:indef1}) is
guaranteed to converge because it is mathematically equivalent to
PCG on the underlying positive definite Schur complement system (\ref{eq:indef2}).
Nevertheless, (\ref{eq:indef1}) is more preferable when the matrix
$C$ is close to singular, because the two indefinite matrices in
(\ref{eq:indef1}) can remain well-conditioned even as $\cond(C)\to\infty$.
As the two Schur complements become increasingly ill-conditioned,
the preconditioning effect in (\ref{eq:indef2}) begins to fail, but
the indefinite PCG in (\ref{eq:indef1}) will maintain its rapid convergence
as if perfectly conditioned. 

\section{Proposed method and summary of results}
 
Given scaling point $W\in\S_{++}^{n}$
and residuals $\tilde{b}\in\R^{m},\tilde{C}\in\S^{n}$, our goal is
to compute $\Delta X\in\S^{n}$ and $\Delta y\in\R^{m}$ in closed-form
via the Karush--Kuhn--Tucker equations
\begin{equation}
\left\Vert \begin{bmatrix}-(W\skron W)^{-1} & \A^{T}\\
\A & 0
\end{bmatrix}\begin{bmatrix}\svec(\Delta X)\\
\Delta y
\end{bmatrix}-\begin{bmatrix}\svec(\tilde{C})\\
\tilde{b}
\end{bmatrix}\right\Vert \quad\le\quad\epsilon\tag{Newt-\ensuremath{\epsilon}}\label{eq:Newt-eps}
\end{equation}
where $\A=[\svec(A_{1}),\dots,\svec(A_{m})]^T$, and then recover $\Delta S=\tilde{C}-\AA^{T}(\Delta y)$.
We focus exclusively on achieving a small enough $\epsilon$
as to be considered numerically exact. 

As mentioned in the introduction, the main difficulty is that the
scaling point $W$ becomes progressively ill-conditioned as the accuracy
parameter is taken $\mu\to0^{+}$. The following is a more precise
statement of (\ref{eq:A1}). 

\begin{assume}\label{asm:analy}There exist absolute constants $L\ge0$
and $0\le\delta<1$ such that the iterates $X,S\in\S_{++}^{n}$, where $W=X^{1/2}(X^{1/2}SX^{1/2})^{-1/2}X^{1/2}$, satisfy 
\[
\left\Vert \frac{1}{\mu}X^{1/2}SX^{1/2}-I\right\Vert \le\delta,\qquad\|X-X^{\star}\|\le L\mu,\qquad\|S-S^{\star}\|\le L,
\]
with respect to $X^{\star},S^{\star}\in\S_{+}^{n}$ that satisfy $X^{\star}S^{\star}=0$
and $\chi_{1}^{-1}\cdot I\preceq X^{\star}+S^{\star}\preceq I$.\end{assume}

Under \asmref{analy}, the eigenvalues of $W$ split into a size-$r^{\star}$
cluster that diverges as $\Theta(1/\sqrt{\mu})$, and a size-$(n-r^{\star})$
cluster that converges to zero as $\Theta(\sqrt{\mu})$. Therefore,
its condition number scales as $\cond(W)=\Theta(1/\mu)$, and this
causes the entire system to become ill-conditioned with a condition
number like $\Theta(1/\mu^{2})$.
\begin{lemma}
\label{lem:eigsplit}Under \asmref{analy}, let $r=\rank(X^{\star})$
and $0<\mu\le1$. The eigenvalues of $W$ satisfy
\[
\begin{array}{ccccccc}
C_{1}/\sqrt{\mu} & \ge & \lambda_{1}(W) & \ge & \lambda_{r}(W) & \ge & 1/(C_{2}\sqrt{\mu}),\\
C_{1}\sqrt{\mu} & \ge & \lambda_{r+1}(W) & \ge & \lambda_{n}(W) & \ge & \sqrt{\mu}/C_{1},
\end{array}
\]
where $C_{1}=\frac{1+L}{1-\delta}$, and $C_{2}=4\chi_{1}+\frac{2L^{2}\chi_{1}}{1-\delta}$.
\end{lemma}

\begin{proof}
Write $w_{i}\equiv\lambda_{i}(W)$ and $x_{i}^{\star}\equiv\lambda_{i}(X^{\star})$
and $s_{i}^{\star}\equiv\lambda_{i}(S^{\star})$, and let $\#$ denote
the matrix geometric mean operator of Ando~\cite{ando1979concavity}.
Substituting the matrix arithmetic-mean geometric-mean~\cite[Corollary 2.1]{ando1979concavity}
inequalities $\sqrt{\mu}W=X\#(\mu S^{-1})\preceq\frac{1}{2}(X+\mu S^{-1})$
and $\sqrt{\mu}W^{-1}=S\#(\mu X^{-1})\preceq\frac{1}{2}(S+\mu X^{-1})$
into $\|\frac{1}{\mu}X^{1/2}SX^{1/2}-I\|\le\delta$ yields 
\[
\frac{1}{1+\delta}\cdot X\preceq\sqrt{\mu}W\preceq\frac{1}{1-\delta}\cdot X,\quad\frac{1}{1+\delta}\cdot S\preceq\sqrt{\mu}W^{-1}\preceq\frac{1}{1-\delta}\cdot S,
\]
which we combine with $\|X-X^{\star}\|\le L\mu$ and $\|S-S^{\star}\|\le L$
to obtain
\begin{gather*}
\sqrt{\mu}w_{1}\le\frac{x_{1}^{\star}+L\mu}{1-\delta},\quad\sqrt{\mu}w_{n}^{-1}\le\frac{s_{1}^{\star}+L}{1-\delta},\quad\sqrt{\mu}w_{r+1}\le\frac{0+L\mu}{1-\delta},\\
\sqrt{\mu}w_{r}\ge\frac{x_{r}^{\star}-L\mu}{1+\delta},\quad\sqrt{\mu}w_{r}^{-1}\le\frac{0+L}{1-\delta}.
\end{gather*}
The first row yields $\sqrt{\mu}w_{1}\le C_{1}$ for $0<\mu\le1$,
and $\sqrt{\mu}w_{n}^{-1}\le C_{1}$ and $w_{r+1}\le C_{1}\sqrt{\mu}$.
We can select a \emph{fixed} point $\hat{\mu}>0$ and combine the
second row to yield 
\[
\sqrt{\mu}w_{r}\ge\max\left\{ \frac{x_{r}^{\star}-L\mu}{1+\delta},\frac{1-\delta}{L}\mu\right\} \ge\min\left\{ \frac{x_{r}^{\star}-L\hat{\mu}}{1+\delta},\frac{1-\delta}{L}\hat{\mu}\right\} ,
\]
which is valid for all $\mu>0$. In particular, if we choose $\hat{\mu}$
such that $x_{r}^{\star}-L\hat{\mu}=\frac{1}{2}x_{r}^{\star}$, then
$\frac{x_{r}^{\star}-L\mu}{1+\delta}\ge\frac{1}{4}x_{r}^{\star}=\frac{1}{4\chi_{1}}\ge\frac{1}{C_{2}}$
and $\frac{1-\delta}{L}\hat{\mu}=\frac{1-\delta}{L}(\frac{x_{r}^{\star}}{2L})\ge\frac{1-\delta}{2L^{2}\chi_{1}}\ge\frac{1}{C_{2}}$. 
\end{proof}
To derive a reformulation of (\ref{eq:Newt-eps}) that remains well-conditioned
even as $\mu\to0^{+}$, our approach is to construct a \emph{low-rank
plus well-conditioned} decomposition of the scaling matrix 
\[
W\skron W=\quad\underbrace{\Q\Sig^{-1}\Q^{T}}_{\text{low-rank}}\quad+\quad\underbrace{\tau^{2}\cdot\E}_{\text{well-conditioned}}.
\]
First, we choose a rank parameter $r\ge\rank(X^{\star})$, and then partition
the orthonormal eigendecomposition of the scaling matrix $W$ into
two parts\begin{subequations}\label{eq:EQSigdef}
\begin{equation}
W=\quad\underbrace{Q\Lambda Q^{T}}_{\text{top }r\text{ eigenvalues}}\quad+\quad\underbrace{Q_{\perp}\Lambda_{\perp}Q_{\perp}^{T}.}_{\text{bottom }n-r\text{ eigenvalues}}\label{eq:Wsplit}
\end{equation}
Then, we choose a threshold parameter $\tau=\frac{1}{2}\lambda_{r+1}(W)$
and define
\begin{gather}
\E=E\skron E,\qquad E=QQ^{T}+\tau^{-1}\cdot Q_{\perp}\Lambda_{\perp}Q_{\perp}^{T},\\
\Q=[Q\skron Q,\quad\sqrt{2}\Psi_{n}^{T}(Q\otimes Q_{\perp})],\\
\Sig=\diag(\Lambda\skron\Lambda-\tau^{2}I,\quad(\Lambda-\tau I)\otimes\Lambda_{\perp})^{-1}.
\end{gather}
\end{subequations}The statement below characterizes this decomposition. 
\begin{proposition}[Low-rank plus well-conditioned decomposition]
\label{prop:correct}Given $W\in\S_{++}^{n}$, choose $1\le r\le n$
and $0<\tau<\lambda_{r}(W)$, and define $\E,\Q,\Sig$ as in (\ref{eq:EQSigdef}).
Then, $W\skron W=\Q\Sig^{-1}\Q^{T}+\tau^{2}\cdot\E$ holds with $\Sig\succ0,$
and $\E\succ0$, and 
\[
\colspan(\Q)=\{\svec(VQ^{T}):V\in\R^{n\times r}\},\qquad\Q^{T}\Q=I_{d}
\]
where $d=nr-\frac{1}{2}r(r-1)$. Moreover, under \asmref{analy},
if $r\ge\rank(X^{\star})$ and $\tau=\frac{1}{2}\lambda_{r+1}(W)$,
then
\[
\begin{array}{ccccccc}
4 & \ge & \lambda_{\max}(\E) & \ge & \lambda_{\min}(\E) & \ge & 1/C_{1}^{4}\\
C_{1}^{4} & \ge & \tau^{2}\cdot\lambda_{\max}(\Sig) & \ge & \tau^{2}\cdot\lambda_{\min}(\Sig) & \ge & \mu^{2}/(4C_{1}^{4})
\end{array}
\]
for all $0<\mu\le1$, where $C_{1},C_{2}$ are as defined in \lemref{eigsplit}.
If additionally $r=\rank(X^{\star})$, then $\mu\cdot C_{1}^{3}C_{2}\ge\tau^{2}\cdot\lambda_{\max}(\Sig)$
for all $0<\mu\le1$. 
\end{proposition}

\begin{proof}
The result follows from straightforward linear algebra and by substituting
\lemref{eigsplit}. For completeness, we provide a proof in \appref{correctness}.
\end{proof}
We propose using an iterative Krylov subspace method to solve the
following \begin{subequations}\label{eq:augrec}
\begin{equation}
\left\Vert \begin{bmatrix}\A\E\A^{T} & \A\Q\\
\Q^{T}\A^{T} & -\tau^{2}\cdot\Sig
\end{bmatrix}\begin{bmatrix}u\\
v
\end{bmatrix}-\begin{bmatrix}b+\tau^{2}\A\E c\\
\tau^{2}\cdot\Q^{T}c
\end{bmatrix}\right\Vert \le\epsilon,\label{eq:augsys}
\end{equation}
and then recover a solution to (\ref{eq:Newt-eps}) via the following
\begin{equation}
\Delta y=\tau^{-2}\cdot u,\qquad\svec(\Delta X)=\E(\A^{T}u-\tau^{2}c)+\Q v.\label{eq:recover}
\end{equation}
\end{subequations}Our main result is a theoretical guarantee that
our specific choice of $\E,\Q,\Sig$ in (\ref{eq:EQSigdef}) results
in a bounded condition number in (\ref{eq:augsys}), even as $\mu\to0^{+}$.
Notably, the well-conditioning holds for any rank parameter $r\ge\rank(X^{\star})$;
this is important, because the exact value of $\rank(X^{\star})$
is often not precisely known in practice. The following is a more
precise version of (\ref{eq:A2}) stated in a scale-invariant form. 

\begin{assume}[Tangent space injectivity]\label{asm:inject}Factor
$X^{\star}=U^{\star}U^{\star T}$ with $U^{\star}\in\R^{n\times r}$.
Then, there exists a condition number $1\le\chi_{2}<\infty$ such
that
\[
\|(\A\A^{T})^{-1/2}\A\;\svec(U^{\star}V^{T}+VU^{\star})\|\ge\chi_{2}^{-1}\cdot\|U^{\star}V^{T}+VU^{\star}\|_{F}\quad\forall V\in\R^{n\times r}.
\]
\end{assume}

\begin{theorem}[Well-conditioning]\label{thm:main}
Let both \asmref{analy} and \asmref{inject} hold with
parameters $L,\delta,\chi_{1},\chi_{2}$. Given $W\in\S_{++}^{n}$,
select $\rank(X^{\star})\le r\le n$ and $\tau=\frac{1}{2}\lambda_{r+1}(W)$,
and define $\E,\Q,\Sig$ as in (\ref{eq:EQSigdef}). For $\G$ satisfying
$\gamma_{\max}\A\A^{T}\succeq\G\succeq\gamma_{\min}\A\A^{T}$ with
$\gamma_{\max}\ge\gamma_{\min}>0$, we have
\[
\cond\left(\begin{bmatrix}\G & \A\Q\\
\Q^{T}\A^{T} & -\tau^{2}\cdot\Sig
\end{bmatrix}\right)=O(\cond(\A\A^{T})\cdot\gamma_{\max}^{4}\cdot\gamma_{\min}^{-8}\cdot L^{12}\cdot(1-\delta)^{-11}\cdot\chi_{1}^{2}\cdot\chi_{2}^{6})
\]
for all $0<\mu\le1$, with no dependence on $1/\mu$.
\end{theorem}

The most obvious way to use this result is to solve the well-conditioned
indefinite system in (\ref{eq:augsys}) using MINRES, and then use
(\ref{eq:recover}) to recover a solution to (\ref{eq:Newt-eps}).
In theory, this immediately allows us to complete an interior-point
method iteration in $O(n^{3})$ time and $O(n^{2})$ memory. \textbf{}
\begin{corollary}
\label{cor:minres}Given $W\in\S_{++}^{n},$ $\A\in\R^{m\times\frac{1}{2}n(n+1)},$
$\tilde{b}\in\R^{m},\tilde{C}\in\S^{n}$, suppose that \asmref{analy}
and \asmref{inject} hold with parameters $L,\delta,\chi_{1},\chi_{2}$.
Select $\rank(X^{\star})\le r\le n$ and $\tau=\frac{1}{2}\lambda_{r+1}(W)$,
and define $\E,\Q,\Sig$ as in (\ref{eq:EQSigdef}). Let
\[
\begin{bmatrix}u_{k}\\
v_{k}
\end{bmatrix}=\MINRES_{k}\left(\begin{bmatrix}\A\E\A^{T} & \A\Q\\
\Q^{T}\A^{T} & -\tau^{2}\cdot\Sig
\end{bmatrix},\begin{bmatrix}\tilde{b}+\tau^{2}\A\E\svec(\tilde{C})\\
\tau^{2}\cdot\Q^{T}\svec(\tilde{C})
\end{bmatrix},I,0\right)
\]
and recover $\Delta X_{k}$ and $\Delta y_{k}$ from (\ref{eq:recover}).
Then, the residual condition (\ref{eq:Newt-eps}) is satisfied in
at most
\[
k=O\left(\cond(\A\A^{T})\cdot L^{20}\cdot(1-\delta)^{-19}\cdot\chi_{1}^{2}\cdot\chi_{2}^{6}\cdot\log\left(\frac{L^{4}}{\mu\cdot\epsilon\cdot(1-\delta)^{4}}\right)\right)\text{ iterations}
\]
for all $0<\mu\le1,$ with a logarithmic dependence on $1/\mu$. Moreover,
suppose that $X\mapsto\A\svec(X)$ and $y\mapsto\A^{T}y$ can be evaluated
in at most $O(n^{3})$ time and $O(n^{2})$ storage for all $X\in\S^{n}$
and $y\in\R^{m}$. Then, this algorithm can be implemented in $O(n^{3}k)$
time and $O(n^{2})$ storage. 
\end{corollary}

In practice, while the condition number of the augmented matrix in
(\ref{eq:augsys}) does remain bounded as $\mu\to0^{+}$, it can grow
to be very large, and MINRES can still require too many iterations
to be effective. We propose an indefinite preconditioning algorithm.
Below, recall that $\PCG_{k}(A,b,P,x_{0})$ denotes the $k$-th iterate
generated by PCG when solving $Ax=b$ using preconditioner $P$ and
starting from an initial $x_{0}$.

\begin{assume}[Fast low-rank matrix-vector product]\label{asm:mvp2}For
$U,V\in\R^{n\times r}$ and $y\in\R^{m}$, the two matrix-vector products
$(U,V)\mapsto\AA(UV^{T}+VU^{T})$ and $y\mapsto[\AA^{T}(y)]U$ can
both be performed in at most $O(n^{2}r)$ time and $O(n^{2})$ storage.
\end{assume}
\begin{corollary}
\label{cor:pcg}Given $W\in\S_{++}^{n},$ $\A\in\R^{m\times\frac{1}{2}n(n+1)},$
$\tilde{b}\in\R^{m},\tilde{C}\in\S^{n}$, suppose that \asmref{analy}
and \asmref{inject} hold with parameters $L,\delta,\chi_{1},\chi_{2}$.
Select $\rank(X^{\star})\le r\le n$ and $\tau=\frac{1}{2}\lambda_{r+1}(W)$,
and define $\E,\Q,\Sig$ as in (\ref{eq:EQSigdef}). Let $v_{0}=-\Sig^{-1}\Q^{T}\tilde{c}$
where $\tilde{c}=\svec(\tilde{C})$, and let
\[
\begin{bmatrix}u_{k}\\
v_{k}
\end{bmatrix}=\PCG_{k}\left(\begin{bmatrix}\A\E\A^{T} & \A\Q\\
\Q^{T}\A^{T} & -\tau^{2}\cdot\Sig
\end{bmatrix},\begin{bmatrix}\tilde{b}+\tau^{2}\A\E\tilde{c}\\
\tau^{2}\cdot\Q^{T}\tilde{c}
\end{bmatrix},\begin{bmatrix}\beta I & \A\Q\\
\Q^{T}\A^{T} & -\tau^{2}\cdot\Sig
\end{bmatrix},\begin{bmatrix}0\\
v_{0}
\end{bmatrix}\right)
\]
where $\beta$ is chosen to satisfy $\lambda_{\min}(\A\E\A^{T})\le\beta\le\lambda_{\max}(\A\E\A^{T})$,
and recover $\Delta X_{k}$ and $\Delta y_{k}$ from (\ref{eq:recover}).
Then, the residual condition (\ref{eq:Newt-eps}) is satisfied in
at most
\[
k=\left\lceil \frac{\kappa_{E}\cdot\kappa_{\A}}{2}\log\left(\frac{\kappa_{E}\cdot\kappa_{\A}\cdot\|\tilde{b}+\A\svec(W\tilde{C}W)\|}{2\epsilon}\right)\right\rceil \text{ iterations}
\]
where $\kappa_{E}=\cond(E)=\sqrt{\cond(\E)}$ and $\kappa_{\A}=\sqrt{\cond(\A\A^{T})}$,
with no dependence on $1/\mu$. Under \asmref{mvp2}, this algorithm
can be implemented with overall cost of $O(n^{3}r^{3}k)$ time and
$O(n^{2}r^{2})$ storage. 
\end{corollary}

As we explained in the discussion around \propref{indefpcg}, the
significance of the indefinite preconditioned PCG in \corref{pcg}
is that both the indefinite matrix and the indefinite preconditioner
are well-conditioned for all $0<\mu\le1$ via \thmref{main}. Therefore,
the preconditioner will continue to work even with very small values
of $\mu$. 

In practice, the iteration count predicted in \corref{pcg} closely matches experimental observations. The rank parameter $r\ge r^{\star}$ controls
the tradeoff between the cost of the preconditioner and the reduction
in iteration count. A larger value of $r$ leads to a montonously
smaller $\cond(E)$ and faster convergence, but also a cubically higher
cost. 

\section{Proof of Well-Conditioning}

Our proof of \thmref{main} is based on the following.
\begin{lemma}
\label{lem:Schur}Suppose that $\gamma_{\max}\A\A^{T}\succeq\G\succeq\gamma_{\min}\A\A^{T}$.
Then,
\[
\cond\left(\begin{bmatrix}\G & \A\Q\\
\Q^{T}\A & -\tau^{2}\Sig
\end{bmatrix}\right)=O(\gamma_{\max}\cdot\gamma_{\min}^{-5}\cdot\cond(\A\A^{T})\cdot\cond(\C))
\]
where $\C=\tau^{2}\Sig+\Q^{T}\A^{T}\G^{-1}\A\Q$.
\end{lemma}

\begin{proof}
Block diagonal preconditioning with $\P=(\A\A^{T})^{-1/2}$ yields
\[
\M=\begin{bmatrix}\G & \A\Q\\
\Q^{T}\A^{T} & -\tau^{2}\Sig
\end{bmatrix}=\begin{bmatrix}\P^{-1} & 0\\
0 & I
\end{bmatrix}\begin{bmatrix}\P\G\P & \P\A\Q\\
\Q^{T}\A^{T}\P & -\tau^{2}\Sig
\end{bmatrix}\begin{bmatrix}\P^{-1} & 0\\
0 & I
\end{bmatrix}.
\]
Let $\tilde{\G}=\P\G\P$ and $\tilde{\A}=\P\A$. We perform a block-triangular
decomposition
\begin{align*}
\tilde{\M}=\begin{bmatrix}\tilde{\G} & \tilde{\A}\Q\\
\Q^{T}\tilde{\A}^{T} & -\tau^{2}\Sig
\end{bmatrix} & =\begin{bmatrix}I & 0\\
\Q^{T}\tilde{\A}^{T}\tilde{\G}^{-1} & I
\end{bmatrix}\begin{bmatrix}\tilde{\G} & 0\\
0 & -\C
\end{bmatrix}\begin{bmatrix}I & \tilde{\G}^{-1}\tilde{\A}\Q\\
0 & I
\end{bmatrix},
\end{align*}
where $\C=\tau^{2}\Sig+\Q^{T}\tilde{\A}^{T}\tilde{\G}\tilde{\A}\Q=\tau^{2}\Sig+\Q^{T}\A^{T}\G^{-1}\A\Q$.
Substituting $\|\tilde{\G}\|\le\gamma_{\max}$ and $\|\tilde{\G}^{-1}\|\le\gamma_{\min}^{-1}$
yields $\cond(\tilde{\M})\le(1+\gamma_{\min}^{-1})^{4}(\gamma_{\max}+\|\C\|)(\gamma_{\min}^{-1}+\|\C^{-1}\|)$.
The desired estimate follows from $\cond(\M)\le(1+\|\A\A^{T}\|)(1+\|(\A\A^{T})^{-1}\|)\cond(\tilde{\M})$.
\end{proof}

Given that $\G$ is already assumed to be well-conditioned, \lemref{Schur}
says that the augmented matrix is well-conditioned if and only if
the Schur complement $\C$ is well-conditioned. \propref{correct}
assures us that $\|\C\|$ is always bounded as $\mu\to0^{+}$, so
the difficulty of the proof is to show that $\|\C^{-1}\|$ also remains
bounded. 

In general, we do not know the true rank $r^{\star}=\rank(X^{\star})$
of the solution. If we choose $r>r^{\star}$, then both terms $\tau^{2}\Sig$
and $\Q^{T}\A^{T}\G^{-1}\A\Q$ will become singular in the limit $\mu=0^{+}$,
but their sum $\C$ will nevertheless remain non-singular. To understand
why this occurs, we need to partition the columns of $Q$ into the
dominant $r^{\star}=\rank(X^{\star})$ eigenvalues and the $r-r^{\star}$
excess eigenvalues, as in
\begin{equation}
W=\quad\underbrace{Q_{1}\Lambda_{1}Q_{1}^{T}}_{\text{top }r^{\star}\text{ eigenvalues}}\quad+\quad\underbrace{Q_{2}\Lambda_{2}Q_{2}^{T}}_{\text{next }r-r^{\star}\text{ eigenvalues}}\quad+\quad\underbrace{Q_{\perp}\Lambda_{\perp}Q_{\perp}^{T}.}_{\text{bottom }n-r\text{ eigenvalues}}\label{eq:Wsplit2}
\end{equation}
We emphasize that the partitioning $Q=[Q_{1},Q_{2}]$ is done purely
for the sake of analysis. Our key insight is that the matrix $\Q$
inherents a similar partitioning. 
\begin{lemma}[Partioning of $\Q$ and $\Sig$]
\label{lem:part}Given $W\in\S_{++}^{n}$ in (\ref{eq:Wsplit2}),
choose $0<\tau<\lambda_{r}(W)$, let $Q=[Q_{1},Q_{2}]$ and $\Lambda=\diag(\Lambda_{1},\Lambda_{2})$.
Define 
\[
\Q=[Q\skron Q,\;\sqrt{2}\Psi_{n}^{T}(Q\otimes Q_{\perp})],\quad\Sig=\diag(\Lambda\skron\Lambda-\tau^{2}I,\;(\Lambda-\tau I)\otimes\Lambda_{\perp})^{-1}.
\]
Then, there exists permutation matrix $\Pi$ so that $\Q\Pi=[\Q_{1},\Q_{2}]$
where 
\[
\Q_{1}=[Q_{1}\skron Q_{1},\;\sqrt{2}\Psi_{n}^{T}(Q_{1}\otimes[Q_{2},Q_{\perp}])],\quad\Q_{2}=[Q_{2}\skron Q_{2},\;\sqrt{2}\Psi_{n}^{T}(Q_{2}\otimes Q_{\perp})],
\]
and $\Pi^{T}\Sig\Pi=\diag(\Sig_{1},\Sig_{2})$ where 
\begin{align*}
\Sig_{1} & =\diag(\Lambda_{1}\skron\Lambda_{1}-\tau^{2}I,\quad\Lambda_{1}\otimes\diag(\Lambda_{2},\Lambda_{\perp})-\tau I\otimes\diag(I,\Lambda_{\perp}))^{-1},\\
\Sig_{2} & =\diag(\Lambda_{2}\skron\Lambda_{2}-\tau^{2}I,\quad(\Lambda_{2}-\tau I)\otimes\Lambda_{\perp})^{-1}.
\end{align*}
\end{lemma}

\begin{proof}
Let $[Q_{1},Q_{2},Q_{\perp}]=I_{n}$ without loss of generality. For
any $B\in\S^{r}$ and $N\in\R^{n\times(n-r)}$, we verify that
\begin{align*}
 & \Q\begin{bmatrix}\svec(B)\\
\sqrt{2}\vector(N)
\end{bmatrix}=\svec\left(\begin{bmatrix}B & N^{T}\\
N & 0
\end{bmatrix}\right)=\svec\left(\begin{bmatrix}B_{11} & B_{21}^{T} & N_{1}^{T}\\
B_{21} & B_{22} & N_{2}^{T}\\
N_{1} & N_{2} & 0
\end{bmatrix}\right)\\
= & \Q_{1}\begin{bmatrix}\svec(B_{11})\\
\sqrt{2}\vector\left(\begin{bmatrix}B_{21}\\
N_{1}
\end{bmatrix}\right)
\end{bmatrix}+\Q_{2}\begin{bmatrix}\svec(B_{22})\\
\vector(N_{2})
\end{bmatrix}=[\Q_{1},\Q_{2}]\Pi^{T}\begin{bmatrix}\svec(B)\\
\sqrt{2}\vector(N)
\end{bmatrix},
\end{align*}
where we have partitioned $B$ and $N$ appropriately. Next, we verify
that
\begin{align*}
 & \Q\Sig^{-1}\begin{bmatrix}\svec(B)\\
\sqrt{2}\vector(N)
\end{bmatrix}=\Q\begin{bmatrix}\svec(\Lambda B\Lambda-\tau^{2}I)\\
\sqrt{2}\vector[\Lambda_{\perp}N(\Lambda-\tau I)]
\end{bmatrix}\\
= & \Q_{1}\begin{bmatrix}\svec(\Lambda_{1}B_{11}\Lambda_{1}-\tau B_{11})\\
\sqrt{2}\vector\left(\begin{bmatrix}\Lambda_{2}B_{21}\Lambda_{1}\\
\Lambda_{\perp}N_{1}\Lambda_{1}
\end{bmatrix}-\tau\begin{bmatrix}B_{21}\\
\Lambda_{\perp}N_{1}
\end{bmatrix}\right)
\end{bmatrix}+\Q_{2}\begin{bmatrix}\svec(\Lambda_{2}B_{22}\Lambda_{2}-\tau B_{22})\\
\vector[\Lambda_{\perp}N(\Lambda_{2}-\tau I)]
\end{bmatrix}\\
= & \Q_{1}\Sig_{1}^{-1}\begin{bmatrix}\svec(B_{11})\\
\sqrt{2}\vector\left(\begin{bmatrix}B_{21}\\
N_{1}
\end{bmatrix}\right)
\end{bmatrix}+\Q_{2}\Sig_{2}^{-1}\begin{bmatrix}\svec(B_{22})\\
\vector(N_{2})
\end{bmatrix}.
\end{align*}
\end{proof}
Applying \lemref{part} allows us to further partition the Schur complement
$\C$ into blocks:
\[
\Pi^{T}\C\Pi=\begin{bmatrix}\tau^{2}\Sig_{1}\\
 & \tau^{2}\Sig_{2}
\end{bmatrix}+\begin{bmatrix}\Q_{1}^{T}\\
\Q_{2}^{T}
\end{bmatrix}\A^{T}\G^{-1}\A\begin{bmatrix}\Q_{1} & \Q_{2}\end{bmatrix}.
\]
In the limit $\mu=0^{+}$, the following two lemmas assert that the
two diagonal blocks $\tau^{2}\Sig_{2}$ and $\Q_{1}^{T}\A^{T}\G^{-1}\A\Q_{1}$
will remain nonsingular. This is our key insight for why $\C$ will
also remain nonsingular. 
\begin{lemma}[Eigenvalue bounds on $\Sig_{2}$]
\label{lem:eig2}Given $W\in\S_{++}^{n},$ choose $\tau=\frac{1}{2}\lambda_{r+1}(W)$,
and define $\Sig_{2}$ as in \lemref{part}. Under \asmref{analy},
$\tau^{2}\lambda_{\min}(\Sig_{2})\ge1/(4C_{1}^{4})$ where $C_{1}$
is defined in \lemref{eigsplit}.
\end{lemma}

\begin{proof}
Write $w_{i}\equiv\lambda_{i}(W)$. Indeed, $\tau^{-2}\lambda_{\min}^{-1}(\Sig_{2})\le4w_{r^{\star}+1}^{2}/w_{n}^{2}$ since 
\[\lambda_{\min}^{-1}(\Sig)\le\max\{\lambda_{\max}(\Lambda_{2}\otimes\Lambda_{\perp}),\lambda_{\max}(\Lambda_{2}\skron\Lambda_{2})\}\le w_{r^{\star}+1}^{2}.\]
Substituting \lemref{eigsplit} yields the desired bound.
\end{proof}
\begin{lemma}[Tangent space injectivity]
\label{lem:inject}Given $U\in\R^{n\times r}$, let $Q=\orth(U)$
and let $Q_{\perp}$ to be its orthogonal complement, and define $\Q=[\Q_{B},\Q_{N}]$
where $\Q_{B}=Q\skron Q$ and $\Q_{N}=\sqrt{2}\Psi_{n}^{T}(Q\otimes Q_{\perp})$.
Then, we have $\lambda_{\min}^{1/2}(\Q^{T}\A^{T}\A\Q)=\eta_{\A}(U)$
where
\[
\eta_{\A}(U)\eqdef\min_{V\in\R^{n\times r}}\{\|\A\svec(UV^{T}+VU^{T})\|:\|UV^{T}+VU^{T}\|_{F}=1\}.
\]
\end{lemma}

\begin{proof}
Let $\rank(U)=k$. The characterization of $\Q=[\Q_{B},\Q_{N}]$ where
$\Q_{B}=Q\skron Q$ and $\Q_{N}=\sqrt{2}\Psi_{n}^{T}(Q\otimes Q_{\perp})$
in \propref{correct} yields 
\[
\lambda_{\min}^{1/2}(\Q^{T}\A^{T}\A\Q)=\min_{\|h\|=1}\|\A\Q h\|\overset{\text{(a)}}{=}\min_{\|\Q h\|=1}\|\A\Q h\|\overset{\text{(b)}}{=}\eta_{\A}(U).
\]
Step (a) follows from $\Q^{T}\Q=I_{d}$ with $d=nk-\frac{1}{2}k(k-1)$.
Step (b) is by substituting $\colspan(\Q)=\{\svec(QV^{T}):V\in\R^{n\times k}\}=\{\svec(U\overline{V}^{T}):\overline{V}\in\R^{n\times r}\}$.
\end{proof}
In order to be able to accommodate small but nonzero values of $\mu>0$,
we will need to be able to perturb \lemref{inject}.

\begin{lemma}[Injectivity perturbation]
\label{lem:pert}\sloppy Let $Q,\hat{Q}\in\R^{n\times r}$ have orthonormal
columns. Then, $|\eta_{\A}(Q)-\eta_{\A}(\hat{Q})|\le10\|\A\|\|(I-\hat{Q}\hat{Q}^{T})Q\|.$
\end{lemma}

We will defer the proof of \lemref{pert} in order to prove our main
result. Below, we factor $X^{\star}=Q^{\star}\Lambda^{\star}Q^{\star T}$.
We will need the following claim. 
\begin{claim}
\label{claim:Lip}Under \asmref{analy}, $\|(I-Q^{\star}Q^{\star T})Q_{1}\|^{2}\le\mu\cdot\frac{C_{2}L}{\sqrt{1-\delta}}$. 
\end{claim}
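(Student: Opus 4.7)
The plan is to quantify how close the top-$r^\star$ eigenspace of $W$ (spanned by $Q_1$) is to the range of $X^\star$ (spanned by $Q^\star$) via a sin-theta/Sylvester perturbation argument. The idea is that, in the $Q^\star$ basis, $\sqrt{\mu}W$ is nearly block diagonal with a small off-diagonal block and a clear separation between the spectra of its two diagonal blocks, both of which are consequences of \asmref{analy}.

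First, I would combine \asmref{analy} with the Nesterov--Todd geometric-mean identity $\sqrt{\mu}W=X\#(\mu S^{-1})$ to derive the two-sided sandwich $X/\sqrt{1+\delta}\preceq\sqrt{\mu}W\preceq X/\sqrt{1-\delta}$, using $\mu S^{-1}\preceq X/(1-\delta)$, which follows from $X^{1/2}SX^{1/2}\succeq(1-\delta)\mu I$. Writing $\sqrt{\mu}W$ in blocks relative to the projector $P^\star=Q^\star Q^{\star T}$ and denoting
$$W_{11}=Q^{\star T}(\sqrt{\mu}W)Q^\star,\qquad W_{22}=Q_\perp^{\star T}(\sqrt{\mu}W)Q_\perp^\star,\qquad W_{12}=Q^{\star T}(\sqrt{\mu}W)Q_\perp^\star,$$
the sandwich immediately gives $\|W_{22}\|\le L\mu/\sqrt{1-\delta}$, because $Q_\perp^{\star T}X^\star Q_\perp^\star=0$ and $\|X-X^\star\|\le L\mu$. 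The off-diagonal block is then controlled by the PSD Schur block inequality $\|W_{12}\|^2\le\|W_{11}\|\cdot\|W_{22}\|$ applied to the positive semidefinite matrix $\sqrt{\mu}W$; equivalently, one may write $\sqrt{\mu}W=X^{1/2}H^{-1/2}X^{1/2}$ with $H=X^{1/2}SX^{1/2}/\mu\succeq(1-\delta)I$ and use $\|X^{1/2}Q_\perp^\star\|^2=\|Q_\perp^{\star T}XQ_\perp^\star\|\le L\mu$. Either route yields $\|W_{12}\|^2=O(L\mu/(1-\delta))$.

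The last step is a Sylvester-type identity obtained by projecting the eigen-equation $(\sqrt{\mu}W)Q_1=Q_1\Lambda_1$ onto $Q_\perp^\star$, namely
$$(Q_\perp^{\star T}Q_1)\,\Lambda_1\;-\;W_{22}\,(Q_\perp^{\star T}Q_1)\;=\;W_{12}^{T}(Q^{\star T}Q_1).$$
By \lemref{eigsplit} we have $\Lambda_1\succeq(1/C_2)I$, so the separation $\lambda_{\min}(\Lambda_1)-\|W_{22}\|\ge 1/C_2-L\mu/\sqrt{1-\delta}$ stays bounded away from zero in the regime $\mu\le\sqrt{1-\delta}/(2LC_2)$. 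The standard Sylvester bound then gives $\|(I-P^\star)Q_1\|=\|Q_\perp^{\star T}Q_1\|\lesssim C_2\|W_{12}\|$, and squaring produces the desired $O(\mu)$ scaling; outside this $\mu$ regime the claim is automatic, because its right-hand side already exceeds the universal bound $\|(I-P^\star)Q_1\|^2\le 1$.

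The main obstacle I expect is pinning down the exact constant $C_2L/\sqrt{1-\delta}$ stated in the claim: a naive execution of the above plan gives a bound of order $C_2^2L\mu/(1-\delta)$, so matching the cleaner form will require either a sharper Schur-type estimate that absorbs $\|W_{11}\|$ into the Sylvester separation, or a more careful treatment of the factorization $\sqrt{\mu}W=X^{1/2}H^{-1/2}X^{1/2}$ to shave the off-diagonal estimate. The qualitative $O(\mu)$ scaling, however, is robust to all these bookkeeping choices and follows directly from the block structure described above.
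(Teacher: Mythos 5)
Your proposal takes a genuinely different route than the paper and is sound up to the constant that you already flag as loose. The paper's argument is a short trace-inequality chain: it lower-bounds $\inner{W}{Q_\perp^\star Q_\perp^{\star T}}$ by $\frac{1}{C_2\sqrt{\mu}}\|Q_1^T Q_\perp^\star\|_F^2$ via $W\succeq\lambda_{r^\star}(W)Q_1Q_1^T$ and \lemref{eigsplit}, and upper-bounds the same inner product by $\frac{L\mu}{\sqrt{(1-\delta)\mu}}$ by unrolling $W=X^{1/2}(X^{1/2}SX^{1/2})^{-1/2}X^{1/2}$, using $\lambda_{\min}(X^{1/2}SX^{1/2})\geq(1-\delta)\mu$ and $X^\star Q_\perp^\star=0$; solving for $\|Q_1^T Q_\perp^\star\|^2$ then gives the claim for all $\mu>0$ without any regime split. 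Your Davis--Kahan/Sylvester argument uses the same three ingredients --- the NT identity, the centering bound, and the closeness of $X$ to $X^\star$ --- and the individual steps (the two-sided sandwich $X/\sqrt{1+\delta}\preceq\sqrt{\mu}W\preceq X/\sqrt{1-\delta}$, the PSD Schur estimate $\|W_{12}\|^2\leq\|W_{11}\|\,\|W_{22}\|$, the Sylvester separation bound, and the fallback for $\mu>\mu_0$) all check out. The structural reason your route loses a factor of $C_2$ and sees $(1-\delta)^{-1}$ rather than $(1-\delta)^{-1/2}$ is that the paper's trace inequality controls the \emph{squared} principal angle directly, with only one power of $1/(C_2\sqrt{\mu})$ multiplying $\|Q_1^T Q_\perp^\star\|_F^2$, whereas the Sylvester bound naturally controls the unsquared angle, so squaring at the end doubles both exponents. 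If you want to recover the paper's tighter constant, the fix is to bypass the intermediate off-diagonal estimate $\|W_{12}\|$ altogether and instead bound $\inner{W}{Q_\perp^\star Q_\perp^{\star T}}$ from below directly using $W\succeq\frac{1}{C_2\sqrt{\mu}}Q_1Q_1^T$, as the paper does.
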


\begin{proof}
\lemref{eigsplit} says $W\succeq\lambda_{r^{\star}}(W)Q_{1}Q_{1}^{T}\succeq(C_{2}\sqrt{\mu})^{-1}Q_{1}Q_{1}^{T}$
and therefore 
\begin{align*}
\frac{1}{C_{2}\sqrt{\mu}}\|Q_{1}^{T}Q_{\perp}^{\star}\|^{2} & \le\inner W{Q_{\perp}^{\star}Q_{\perp}^{\star T}}=\inner{(X^{1/2}SX^{1/2})^{-1/2}}{X^{1/2}Q_{\perp}^{\star}Q_{\perp}^{\star T}X^{1/2}}\\
 & \le\|(X^{1/2}SX^{1/2})^{-1/2}\|\cdot\inner X{Q_{\perp}^{\star}Q_{\perp}^{\star T}}\\
 & =\lambda_{\min}^{-1/2}(X^{1/2}SX^{1/2})\cdot\inner{X-X^{\star}}{Q_{\perp}^{\star}Q_{\perp}^{\star T}}\le\mu\cdot\frac{L}{\sqrt{(1-\delta)\mu}}.
\end{align*}
\end{proof}
\begin{proof}[Proof of \thmref{main}]
Write $\bK_{ij}\eqdef\Q_{i}^{T}\A^{T}\G^{-1}\A\Q_{j}$ for $i,j\in\{1,2\}$.
Under \asmref{analy}, substituting \claimref{Lip} into \lemref{pert}
with $\B=(\A\A^{T})^{-1/2}\A$ and taking $\eta_{\B}(U^{\star})=1/\chi_{2}$
from \asmref{inject} yields
\begin{align*}
\lambda_{\min}^{1/2}(\Q_{1}^{T}\B^{T}\B\Q_{1})=\eta_{\B}(Q_{1}) & \ge\eta_{\B}(Q^{\star})-5\|\B\|\|(I-Q^{\star}Q^{\star T})Q_{1}\|\\
 & \ge\frac{1}{\chi_{2}}-\mu\cdot\frac{5L}{C_{2}\sqrt{1-\delta}}.
\end{align*}
In particular, if $0<\mu\le\mu_{0}\eqdef\frac{C_{2}\sqrt{1-\delta}}{6L}$,
then $\lambda_{\min}^{1/2}(\Q_{1}^{T}\B^{T}\B\Q_{1})\ge\frac{1}{6\chi_{2}}$
and
\begin{align}
\bK_{11}=\Q_{1}^{T}\A^{T}\G^{-1}\A\Q_{1}\succeq & \frac{1}{\gamma_{\max}}\Q_{1}^{T}\A^{T}(\A\A^{T})^{-1}\A\Q_{1}\nonumber \\
= & \frac{1}{\gamma_{\max}}\Q_{1}^{T}\B^{T}\B\Q_{1}\succeq\frac{1}{36\chi_{2}^{2}\gamma_{\max}}.\label{eq:K11lb}
\end{align}
This implies $\C\succ0$ via the steps below: 
\begin{align*}
\Pi^{T}\C\Pi & =\begin{bmatrix}\tau^{2}\Sig_{1}\\
 & \tau^{2}\Sig_{2}
\end{bmatrix}+\begin{bmatrix}\bK_{11} & \bK_{21}^{T}\\
\bK_{21} & \bK_{22}
\end{bmatrix}\\
 & \succeq\begin{bmatrix}0 & 0\\
0 & \tau^{2}\Sig_{2}
\end{bmatrix}+\begin{bmatrix}I\\
\bK_{21}\bK_{11}^{-1} & I
\end{bmatrix}\begin{bmatrix}\bK_{11}\\
 & 0
\end{bmatrix}\begin{bmatrix}I & \bK_{21}\bK_{11}^{-1}\\
 & I
\end{bmatrix}\\
 & =\begin{bmatrix}I\\
\bK_{21}\bK_{11}^{-1} & I
\end{bmatrix}\begin{bmatrix}\bK_{11}\\
 & \tau^{2}\Sig_{2}
\end{bmatrix}\begin{bmatrix}I & \bK_{21}\bK_{11}^{-1}\\
 & I
\end{bmatrix}.
\end{align*}
Indeed, substituting $\|\bK_{11}^{-1}\|\le36\chi_{2}^{2}\gamma_{\max}$
from (\ref{eq:K11lb}) and $\|(\tau^{2}\Sig_{2})^{-1}\|\le4C_{1}^{4}$
from \lemref{eig2} and $C_{1}=O(L\cdot(1-\delta)^{-1})$ yields
\begin{align*}
\|\C^{-1}\| & \le(1+\|\bK_{21}\bK_{11}^{-1}\|)^{2}(\|\bK_{11}^{-1}\|+\|(\tau^{2}\Sig_{2})^{-1}\|)\\
 & \le\left(1+36\chi_{2}^{2}\gamma_{\max}\cdot\gamma_{\min}^{-1}\right)^{2}\left(36\chi_{2}^{2}\cdot\gamma_{\max}+4C_{1}^{4}\right)\\
 & =O(\gamma_{\max}^{3}\cdot\gamma_{\min}^{-2}\cdot\chi_{2}^{6}\cdot L^{4}\cdot(1-\delta)^{-4}).
\end{align*}
The second line uses the hypothesis $\G\succeq\gamma_{\min}\A\A^{T}\succ0$
to bound
\[
\|\mathbf{K}_{21}\|\le\|\A^{T}\G^{-1}\A\|\le\gamma_{\min}^{-1}\|\A^{T}(\A\A^{T})^{-1}\A\|=\gamma_{\min}^{-1}.
\]
Otherwise, if $\mu_{0}<\mu\le1$, then substituting $\|(\tau^{2}\Sig)^{-1}\|\le4C_{1}^{4}/\mu^{2}$
from \propref{correct} and $C_{2}=O(\chi_{1}\cdot L^{2}\cdot(1-\delta)^{-1})$
yields
\[
\|\C^{-1}\|\le\|(\tau^{2}\Sig)^{-1}\|\le4C_{1}^{4}/\mu_{0}^{2}=4C_{1}^{4}\cdot\frac{36L^{2}C_{2}^{2}}{1-\delta}=O(L^{8}\cdot\chi_{1}^{2}\cdot(1-\delta)^{-7}).
\]
Finally, for all $0<\mu\le1$, 
\[
\|\C\|\le\|\tau^{2}\Sig\|+\|\Q^{T}\A^{T}\G^{-1}\A\Q\|\le C_{1}^{4}+\gamma_{\min}^{-1}=O(\gamma_{\min}^{-1}\cdot L^{4}\cdot(1-\delta)^{-4}).
\]
Substituting $\|\C\|$ and $\|\C^{-1}\|$ into \lemref{Schur} with
the following simplification
\[
\xi(1+\gamma_{\min}^{-1})^{4}(\gamma_{\max}+\|\C\|)(\gamma_{\min}^{-1}+\|\C^{-1}\|)=O(\gamma_{\max}\cdot\gamma_{\min}^{-5}\cdot\cond(\A\A^{T})\cdot\cond(\C))
\]
yields the desired bound. 
\end{proof}
We now turn to the proof of \lemref{pert}. We first need to prove
a technical lemma on the distance between orthogonal matrices. 
\global\long\def\polar{\operatorname{polar}}%
\global\long\def\Orth{\operatorname{Orth}}%
Below, $\Orth(n)$ denotes the set of $n\times n$ orthonormal matrices.
\begin{lemma}
\label{lem:Qpert}Let $Q,\hat{Q}\in\R^{n\times r}$ and $Q_{\perp},\hat{Q}_{\perp}\in\R^{n\times(n-r)}$
satisfy $[Q,Q_{\perp}]\in\Orth(n)$ and $[\hat{Q},\hat{Q}_{\perp}]\in\Orth(n)$.
Then, 
\begin{align*}
\max\left\{ \min_{R\in\Orth(r)}\|Q-\hat{Q}R\|,\min_{R_{\perp}\in\Orth(n-r)}\|Q_{\perp}-\hat{Q}_{\perp}R_{\perp}\|\right\}  & \le\sqrt{2}\|Q^{T}\hat{Q}_{\perp}\|.
\end{align*}
\end{lemma}

\begin{proof}
Let $\sigma_{\max}(\hat{Q}_{\perp}^{T}Q)=\sin\theta.$ Define $\polar(M)=\arg\max_{R\in\Orth(n)}\inner MR=UV^{T}$
where $M=U\Sig V^{T}$ is the usual SVD. The choice of $R=\polar(\hat{Q}^{T}Q)$
yields 
\begin{align*}
\|Q-\hat{Q}R\|^{2} & =\|\hat{Q}-QR^{T}\|^{2}\le\|I-\hat{Q}^{T}QR^{T}\|^{2}+\|\hat{Q}_{\perp}^{T}QR^{T}\|^{2}\\
 & \overset{\text{(a)}}{=}[1-\sigma_{\min}(\hat{Q}^{T}Q)]^{2}+\sigma_{\max}^{2}(\hat{Q}_{\perp}^{T}Q)\\
 & \overset{\text{(b)}}{=}(1-\cos\theta)^{2}+\sin^{2}\theta\le2\sin^{2}(\theta)\text{ for }\theta\in[0,\pi/2].
\end{align*}
\sloppy Step (a) is because our choice of $R$ renders $\hat{Q}^{T}QR^{T}=RQ^{T}\hat{Q}\succeq0$.
Step (b) is because of $\sigma_{\min}(\hat{Q}^{T}Q)=\cos\theta$,
which follows from $\lambda_{\min}(Q^{T}\hat{Q}\hat{Q}^{T}Q)=\lambda_{\min}(Q^{T}[I-\hat{Q}_{\perp}\hat{Q}_{\perp}^{T}]Q)=1-\lambda_{\max}(Q^{T}\hat{Q}_{\perp}\hat{Q}_{\perp}^{T}Q)$.
The proof for $\|Q_{\perp}-\hat{Q}_{\perp}R_{\perp}\|^{2}\le2\sin^{2}(\theta)$
follows identical steps with the choice of $R_{\perp}=\polar(\hat{Q}_{\perp}^{T}Q_{\perp})$.
\end{proof}
We will now use \lemref{Qpert} to prove \lemref{pert}.
\begin{proof}[Proof of \lemref{pert}]
Given that $\eta(Q)=\eta(QR)$ for $R\in\Orth(r)$, we assume without
loss of generality that $\|Q-\hat{Q}\|\le\sqrt{2}\|Q^{T}\hat{Q}_{\perp}\|$
and $\|Q_{\perp}-\hat{Q}_{\perp}\|\le\sqrt{2}\|Q^{T}\hat{Q}_{\perp}\|$
as in \lemref{Qpert}. Applying Weyl's inequality for singular values
yields
\[
|\eta(Q)-\eta(\hat{Q})|=|\sigma_{d}(\A\Q)-\sigma_{d}(\A\hat{\Q})|\le\|\A\Q-\A\hat{\Q}\|\le\|\A\|\|\Q-\hat{\Q}\|.
\]
Next, observe that 
\begin{align*}
(\Q-\hat{\Q})\begin{bmatrix}\svec(B)\\
\frac{1}{\sqrt{2}}\vector(N)
\end{bmatrix} & =\svec\left(\begin{bmatrix}Q & Q_{\perp}\end{bmatrix}\begin{bmatrix}B\\
N
\end{bmatrix}Q^{T}-\begin{bmatrix}\hat{Q} & \hat{Q}_{\perp}\end{bmatrix}\begin{bmatrix}B\\
N
\end{bmatrix}\hat{Q}^{T}\right)\\
 & =\svec\left(\begin{bmatrix}Q & Q_{\perp}\end{bmatrix}\begin{bmatrix}B\\
N
\end{bmatrix}(Q-\hat{Q})^{T}\right)\\
 & \qquad+\svec\left(\left(\begin{bmatrix}Q & Q_{\perp}\end{bmatrix}-\begin{bmatrix}\hat{Q} & \hat{Q}_{\perp}\end{bmatrix}\right)\begin{bmatrix}B\\
N
\end{bmatrix}\hat{Q}^{T}\right),
\end{align*}
and therefore
\begin{align*}
\frac{1}{\sqrt{2}}\|\Q-\hat{\Q}\| & \le\|Q-\hat{Q}\|+\|[Q,Q_{\perp}]-[\hat{Q},\hat{Q}_{\perp}]\|\\
 & \le\|Q-\hat{Q}\|+\sqrt{\|Q-\hat{Q}\|^{2}+\|Q_{\perp}-\hat{Q}_{\perp}\|^{2}}\le(1+\sqrt{2})\cdot\|Q^{T}\hat{Q}_{\perp}\|.
\end{align*}
Finally, we note that $(1+\sqrt{2})\cdot2\approx4.828<5$.
\end{proof}

\section{Solution via MINRES}

Let us estimate the cost of solving (\ref{eq:augrec}) using MINRES,
as in \corref{minres}. In the initial setup, we spend $O(n^{3})$
time and $O(n^{2})$ memory to explicitly compute the diagonal matrix
$\Sig$, and to implicitly set up $\E,\Q$ via their constituent matrices
$E\in\S^{n}$, $Q\in\R^{n\times r},$ and $Q_{\perp}\in\R^{n\times(n-r)}$.
Afterwards, each matrix-vector product with $\E,\Q,\Q^{T}$ can be
evaluated in $O(n^{3})$ time and $O(n^{2})$ memory, because $\E\svec(X)=\svec(EXE)$
and
\[
\Q\begin{bmatrix}\svec(B)\\
\vector(N)
\end{bmatrix}=\svec\left(\begin{bmatrix}Q & Q_{\perp}\end{bmatrix}\begin{bmatrix}B\\
\sqrt{2}N
\end{bmatrix}Q^{T}\right),\quad\Q^{T}\svec(X)=\begin{bmatrix}\svec(Q^{T}XQ)\\
\sqrt{2}\vector(Q_{\perp}^{T}XQ)
\end{bmatrix}.
\]
Therefore, the per-iteration cost of MINRES, which is dominated by
a single matrix-vector product with the augmented matrix in (\ref{eq:augsys}),
is $O(n^{3})$ time and $O(n^{2})$ memory, plus a single matrix-vector
product with $\A$ and $\A^{T}$. This is also the cost of evaluating
the recovery equation in (\ref{eq:recover}).

We now turn to the number of iterations. \propref{minres} says that
MINRES converges to $\epsilon$ residual in $O(\kappa\log(1/\epsilon))$
iterations, where $\kappa$ is the condition estimate from \thmref{main},
but we need the following lemma to ensure that small residual in (\ref{eq:augsys})
would actually recover through (\ref{eq:recover}) a small-residual
solution $\Delta X,\Delta y$ to our original problem (\ref{eq:Newt-eps}).
\begin{lemma}[Propagation of small residuals]
\label{lem:smallres}Given $\D=\Q\Sig^{-1}\Q^{T}+\tau^{2}\cdot\E$
where $\Q^{T}\Q=I$ and $\E\succ0$ and $\Sig\succ0$. Let
\begin{equation}
\begin{bmatrix}\A\E\A^{T} & \A\Q\\
\Q^{T}\A^{T} & -\tau^{2}\Sig
\end{bmatrix}\begin{bmatrix}u\\
v
\end{bmatrix}-\begin{bmatrix}b+\tau^{2}\cdot\A\E c\\
\tau^{2}\cdot\Q^{T}c
\end{bmatrix}=\begin{bmatrix}p\\
d
\end{bmatrix}.\label{eq:resid1}
\end{equation}
Then, $y=\tau^{-2}\cdot u$ and $x=\E(\A^{T}u-\tau^{2}c)+\Q v$ satisfy
the following
\begin{equation}
\begin{bmatrix}-\D^{-1} & \A^{T}\\
\A & 0
\end{bmatrix}\begin{bmatrix}x\\
y
\end{bmatrix}-\begin{bmatrix}c\\
b
\end{bmatrix}=\begin{bmatrix}\tau^{-2}\E^{-1}\Q\C^{-1}d\\
p
\end{bmatrix}\label{eq:resid2}
\end{equation}
where $\C=\tau^{2}\Sig+\Q^{T}\E^{-1}\Q$.
\end{lemma}

\begin{proof}
Define $(x^{\star},u^{\star},v^{\star})$ as the exact solution to
the following system of equations
\[
\begin{bmatrix}-\E^{-1} & \A^{T} & \E^{-1}\Q\\
\A & 0 & 0\\
\Q^{T}\E^{-1} & 0 & -\C
\end{bmatrix}\begin{bmatrix}x^{\star}\\
u^{\star}\\
v^{\star}
\end{bmatrix}=\begin{bmatrix}\tau^{2}c\\
b\\
0
\end{bmatrix}.
\]
Our key observation is that the matrix has two possible block factorizations
\begin{align}
 & \begin{bmatrix}I\\
-\A\E & I & 0\\
-\Q^{T} & 0 & I
\end{bmatrix}\begin{bmatrix}-\E^{-1}\\
 & \A\E\A^{T} & \A\Q\\
 & \Q^{T}\A^{T} & -\tau^{2}\Sig
\end{bmatrix}\begin{bmatrix}I & -\E\A^{T} & -\Q\\
 & I & 0\\
 & 0 & I
\end{bmatrix}\label{eq:tri1}\\
= & \begin{bmatrix}I & 0 & -\E^{-1}\Q\C^{-1}\\
0 & I & 0\\
 &  & I
\end{bmatrix}\begin{bmatrix}-\tau^{2}\D^{-1} & \A^{T}\\
\A & 0\\
 &  & -\C
\end{bmatrix}\begin{bmatrix}I & 0\\
0 & I\\
-\C^{-1}\Q^{T}\E^{-1} &  & I
\end{bmatrix},\label{eq:tri2}
\end{align}
where $\tau^{2}\D^{-1}=\E^{-1}-\E^{-1}\Q\C^{-1}\Q^{T}\E^{-1}$ is
via the Sherman--Morrison--Woodbury identity. We verify from (\ref{eq:tri1})
and (\ref{eq:tri2}) respectively that 
\[
\begin{bmatrix}\A\E\A^{T} & \A\Q\\
\Q^{T}\A^{T} & -\tau^{2}\Sig
\end{bmatrix}\begin{bmatrix}u^{\star}\\
v^{\star}
\end{bmatrix}=\begin{bmatrix}b+\tau^{2}\cdot\A\E c\\
\tau^{2}\cdot\Q^{T}c
\end{bmatrix},\quad\begin{bmatrix}-\tau^{2}\D^{-1} & \A^{T}\\
\A & 0
\end{bmatrix}\begin{bmatrix}x^{\star}\\
u^{\star}
\end{bmatrix}=\begin{bmatrix}\tau^{2}c\\
b
\end{bmatrix},
\]
and $x^{\star}=\E\A^{T}u^{\star}+\Q v^{\star}$. Now, for a given
$(x,u,v)$, the associated errors are
\[
\begin{bmatrix}\A\E\A^{T} & \A\Q\\
\Q^{T}\A^{T} & -\tau^{2}\Sig
\end{bmatrix}\begin{bmatrix}u-u^{\star}\\
v-v^{\star}
\end{bmatrix}=\begin{bmatrix}p\\
d
\end{bmatrix},\qquad x-x^{\star}=\begin{bmatrix}\E\A^{T} & \Q\end{bmatrix}\begin{bmatrix}u-u^{\star}\\
v-v^{\star}
\end{bmatrix},
\]
and these can be rearranged as
\[
\begin{bmatrix}-\E^{-1}\\
 & \A\E\A^{T} & \A\Q\\
 & \Q^{T}\A^{T} & -\tau^{2}\Sig
\end{bmatrix}\begin{bmatrix}I & -\E\A^{T} & -\Q\\
 & I & 0\\
 & 0 & I
\end{bmatrix}\begin{bmatrix}x-x^{\star}\\
u-u^{\star}\\
v-v^{\star}
\end{bmatrix}=\begin{bmatrix}0\\
p\\
d
\end{bmatrix}.
\]
It follows from our first block-triangular factorization (\ref{eq:tri1})
that
\[
\begin{bmatrix}-\E^{-1} & \A^{T} & \E^{-1}\Q\\
\A & 0 & 0\\
\Q^{T}\E^{-1} & 0 & -\C
\end{bmatrix}\begin{bmatrix}x-x^{\star}\\
u-u^{\star}\\
v-v^{\star}
\end{bmatrix}=\begin{bmatrix}I\\
-\A\E & I & 0\\
-\Q^{T} & 0 & I
\end{bmatrix}\begin{bmatrix}0\\
p\\
d
\end{bmatrix}=\begin{bmatrix}0\\
p\\
d
\end{bmatrix}.
\]
It follows from our second block-triangular factorization (\ref{eq:tri2})
that
\[
\begin{bmatrix}-\tau^{2}\D^{-1} & \A^{T}\\
\A & 0\\
 &  & -\C
\end{bmatrix}\begin{bmatrix}I & 0\\
0 & I\\
-\C^{-1}\Q^{T}\E^{-1} &  & I
\end{bmatrix}\begin{bmatrix}x-x^{\star}\\
u-u^{\star}\\
v-v^{\star}
\end{bmatrix}=\begin{bmatrix}\E^{-1}\Q\C^{-1}d\\
p\\
d
\end{bmatrix}.
\]
We obtain (\ref{eq:resid2}) by isolating the first two block-rows,
rescaling the first block-row by $\tau^{-2}$, and then substituting
$(x^{\star},u^{\star})$ as an exact solution.
\end{proof}
It follows that the residuals between (\ref{eq:augrec}) and (\ref{eq:Newt-eps})
are related as
\[
\left\Vert \begin{bmatrix}-\D^{-1} & \A^{T}\\
\A & 0
\end{bmatrix}\begin{bmatrix}x\\
y
\end{bmatrix}-\begin{bmatrix}b\\
c
\end{bmatrix}\right\Vert \le\frac{16C_{1}^{4}C_{2}^{2}}{\mu}\cdot\left\Vert \begin{bmatrix}\A\E\A^{T} & \A\Q\\
\Q^{T}\A^{T} & -\tau^{2}\Sig
\end{bmatrix}\begin{bmatrix}u\\
v
\end{bmatrix}-\begin{bmatrix}b+\tau^{2}\cdot\A\E c\\
\tau^{2}\cdot\Q^{T}c
\end{bmatrix}\right\Vert 
\]
where we used $\|\tau^{-2}\E^{-1}\Q\C^{-1}\|\le\tau^{-2}\|\E^{-1}\|\|\E\|,$
and substituted $\tau=\frac{1}{2}\lambda_{r+1}(W)\ge\sqrt{\mu}/C_{1}$
where $C_{1}=\frac{1+L}{1-\delta}$ from \lemref{eigsplit}, and $\|\E\|\le4$
and $\|\E^{-1}\|\le C_{1}^{4}$ from \propref{correct}. We obtain
\corref{minres} by substituting \thmref{main} into \propref{minres},
setting $\gamma_{\max}=\|\E\|=4$ and $\gamma_{\min}^{-1}=\|\E^{-1}\|=C_{1}^{4}$
and then performing enough iterations to achieve a residual of $\frac{\mu\epsilon}{16C_{1}^{4}}$. 

\section{Solution via indefinite PCG}

Let us estimate the cost of solving (\ref{eq:augrec}) using indefinite
PCG, as in \corref{pcg}. After setting up $\E,\Q,\Q^{T}$ in the
same way as using MINRES, we spend $O(d\cdot n^{2}r)=O(n^{3}r^{2})$
time and $O(d^{2})=O(n^{2}r^{2})$ memory to explicitly form the $d\times d$
Schur complement of the preconditioner
\[
\C=\tau^{2}\Sig+\beta^{-1}\Q^{T}\A^{T}\A\Q,
\]
where $d=nr-\frac{1}{2}r(r-1)$. This can be done using $d$ matrix-vector products with $\A\Q$ and
$\Q^{T}\A^{T}$, which under \asmref{mvp2}, can each be evaluated
in at $O(n^{2}r)$ time and $O(n^{2})$ memory:
\begin{gather*}
\A\Q\begin{bmatrix}\svec(B)\\
\vector(N)
\end{bmatrix}=\AA(QU^{T}+UQ^{T}),\qquad\Q^{T}\A^{T}y=\begin{bmatrix}\svec(Q^{T})\\
\sqrt{2}\vector(Q_{\perp}^{T}V)
\end{bmatrix}
\end{gather*}
where $U=QB+\sqrt{2}Q_{\perp}N$ and $V=\AA^{T}(y)Q$. After forming the Schor complement $\C$, the indefinite preconditioner can be applied in its block
triangular form 
\[
\begin{bmatrix}\beta I & \A\Q\\
\Q^{T}\A^{T} & -\tau^{2}\Sig
\end{bmatrix}^{-1}=\begin{bmatrix}I & -\A\Q\\
0 & I
\end{bmatrix}\begin{bmatrix}\beta I & 0\\
0 & -\C^{-1}
\end{bmatrix}\begin{bmatrix}I & 0\\
-\Q^{T}\A^{T} & I
\end{bmatrix}
\]
as a linear solve with $\C$, and a matrix-vector products with each
of $\A\Q$ and $\Q^{T}\A^{T}$, for $O(d^{3})=O(n^{3}r^{3})$ time
and $O(d^{2})=O(n^{2}r^{2})$ memory. Moreover, note that under \asmref{mvp2},
each matrix-vector product with $\A$ and $\A^{T}$ costs $O(n^{3})$
time and $O(n^{2})$ memory. Therefore, the per-iteration cost of
PCG, which is dominated by a single matrix-vector product with the
augmented matrix in (\ref{eq:augsys}) and a single application of
the indefinite preconditioner, is $O(n^{3}r^{3})$ time and $O(n^{2}r^{2})$
memory.

We now turn to the number of iterations. \propref{indefpcg} says
that indefinite PCG will generate identical iterates $u_{k}$ to regular
PCG applied to the following problem
\[
\underbrace{\A(\E+\tau^{-2}\Q\Sig^{-1}\Q^{T})\A^{T}}_{\H}u_{k}=\underbrace{b+\tau^{2}\A(\E+\tau^{-2}\Q\Sig^{-1}\Q^{T})c}_{g}
\]
with preconditioner $\tilde{\H}=\beta I+\tau^{-2}\A\Q\Sig^{-1}\Q^{T}\A^{T}$
and iterates $v_{k}$ that exactly satisfy 
\[
\Q^{T}\A^{T}u_{k}-\tau^{2}\cdot\Sig v_{k}=\tau^{2}\cdot\Q^{T}c.
\]
Plugging these iterates back into (\ref{eq:augsys}) yields
\[
\begin{bmatrix}\A\E\A^{T} & \A\Q\\
\Q^{T}\A^{T} & -\tau^{2}\Sig
\end{bmatrix}\begin{bmatrix}u_{k}\\
v_{k}
\end{bmatrix}-\begin{bmatrix}b+\tau^{2}\cdot\A\E c\\
\tau^{2}\cdot\Q^{T}c
\end{bmatrix}=\begin{bmatrix}\H u_{k}-g\\
0
\end{bmatrix},
\]
and plugging into \lemref{smallres} and \propref{pcg} shows that
the $\Delta X,\Delta y$ recovered from (\ref{eq:recover}) will satisfy
\[
\left\Vert \begin{bmatrix}-\D^{-1} & \A^{T}\\
\A & 0
\end{bmatrix}\begin{bmatrix}x\\
y
\end{bmatrix}-\begin{bmatrix}b\\
c
\end{bmatrix}\right\Vert =\|\H u_{k}-g\|\le2\sqrt{\kappa}\left(\frac{\sqrt{\kappa}-1}{\sqrt{\kappa}+1}\right)^{k}\|g\|
\]
where $\kappa=\cond(\tilde{\H}^{-1/2}\H\tilde{\H}^{-1/2})$. We use
the following argument to estimate $\kappa$.
\begin{claim}
Let $\E\succ0$ satisfy $\lambda_{\min}(\E)\le\beta\le\lambda_{\max}(\E)$.
Then 
\[
\cond((\beta I+\U\U^{T})^{-1/2}(\E+\U\U^{T})(\beta I+\U\U^{T})^{-1/2})\le\cond(\E).
\]
\end{claim}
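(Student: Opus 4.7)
The plan is to bound the generalized Rayleigh quotient $R(x) \eqdef (x^T(\E+\U\U^T)x)/(x^T(\beta I + \U\U^T)x)$ above and below uniformly in $x$, and show that those bounds are $\lambda_{\max}(\E)/\beta$ and $\lambda_{\min}(\E)/\beta$ respectively. Dividing yields exactly $\cond(\E)$.

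To carry this out, I would first substitute $\lambda_{\min}(\E) I \preceq \E \preceq \lambda_{\max}(\E) I$ to obtain
\[
\frac{\lambda_{\min}(\E)\|x\|^{2} + \|\U^{T}x\|^{2}}{\beta \|x\|^{2}+\|\U^{T}x\|^{2}} \;\le\; R(x) \;\le\; \frac{\lambda_{\max}(\E)\|x\|^{2}+\|\U^{T}x\|^{2}}{\beta\|x\|^{2}+\|\U^{T}x\|^{2}}.
\]
Then I would invoke the elementary monotonicity fact that, for fixed $a,b>0$, the scalar map $c \mapsto (a+c)/(b+c)$ is monotone in $c \ge 0$, increasing when $a \le b$ and decreasing when $a \ge b$ (since its derivative is $(b-a)/(b+c)^{2}$). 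Applying this with $a = \lambda_{\min}(\E)\|x\|^{2} \le \beta\|x\|^{2} = b$ and $c = \|\U^{T}x\|^{2}\ge 0$ shows the lower bound is minimized at $c=0$, giving $\lambda_{\min}(\E)/\beta$. Applying it with $a = \lambda_{\max}(\E)\|x\|^{2} \ge \beta\|x\|^{2}$ shows the upper bound is maximized at $c=0$, giving $\lambda_{\max}(\E)/\beta$. This uses the hypothesis $\lambda_{\min}(\E) \le \beta \le \lambda_{\max}(\E)$ in an essential way.

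Taking the infimum and supremum of $R(x)$ over $x \ne 0$ yields $\lambda_{\min}(\E)/\beta \le \lambda_{\min}(M^{-1/2}AM^{-1/2})$ and $\lambda_{\max}(M^{-1/2}AM^{-1/2}) \le \lambda_{\max}(\E)/\beta$, where $A = \E+\U\U^{T}$ and $M = \beta I+\U\U^{T}$. Dividing gives $\cond(M^{-1/2}AM^{-1/2}) \le (\lambda_{\max}(\E)/\beta)/(\lambda_{\min}(\E)/\beta) = \cond(\E)$, as required. There is really no single hard step — the only thing one must not skip is explicitly verifying the monotonicity of $(a+c)/(b+c)$ and noting that the sandwiching hypothesis $\lambda_{\min}(\E)\le\beta\le\lambda_{\max}(\E)$ is precisely what makes both extremizations collapse at $c=0$; without that hypothesis, the extremal $c$ could be $+\infty$ and both ratios would degenerate to $1$, giving a worse bound.
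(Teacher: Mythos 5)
Your proof is correct and takes essentially the same approach as the paper. The paper establishes the two Loewner-order inequalities $\E+\U\U^{T}\preceq(\lambda_{\max}(\E)/\beta)(\beta I+\U\U^{T})$ and $\E+\U\U^{T}\succeq(\lambda_{\min}(\E)/\beta)(\beta I+\U\U^{T})$ directly, each hinging on $\lambda_{\max}(\E)/\beta\ge1$ or $\lambda_{\min}(\E)/\beta\le1$ to absorb the $\U\U^{T}$ term; your Rayleigh-quotient computation together with the monotonicity of $c\mapsto(a+c)/(b+c)$ is precisely the scalarized form of these same matrix inequalities.
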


\begin{proof}
We have $\E+\U\U^{T}\preceq\lambda_{\max}(\E)I+UU^{T}\preceq(\lambda_{\max}(\E)/\beta)(\beta I+UU^{T})$
because $\lambda_{\max}(\E)/\beta\ge1$. We have $\E+\U\U^{T}\succeq\lambda_{\min}(\E)I+UU^{T}\succeq(\lambda_{\min}(\E)/\beta)(\beta I+UU^{T})$
because $\lambda_{\min}(\E)/\beta\le1$.
\end{proof}
Therefore, given that $\lambda_{\min}(\A\E\A^{T})\le\beta\le\lambda_{\max}(\A\E\A^{T})$
holds by hypothesis, it takes at most $k$ iterations to achieve $\|\H u_{k}-g\|\le\epsilon$,
where 
\[
k=\left\lceil \frac{1}{2}\sqrt{\cond(\A\E\A^{T})}\log\left(\frac{\cond(\A\E\A^{T})\cdot\|g\|}{2\epsilon}\right)\right\rceil .
\]
Finally, we bound $\cond(\A\E\A^{T})\le\cond(\E)\cdot\cond(\A\A^{T})$
and $\cond(\E)\le\cond(E)^{2}$.

\section{Experiments}
We perform extensive experiments on challenging instances of \emph{robust matrix completion} (RMC) and \emph{sensor network localization} (SNL) problems, where the number of constraints is large, i.e., $m=O(n^{2})$. We show that these two problems satisfy our theoretical assumptions and therefore our proposed method can solve them to high accuracies in $O(n^3r^3)$ time and $O(n^2r^2)$ memory. This is a significant improvement over general-purpose IPM solvers, which often struggle to solve SDP instances with $m=O(n^{2})$ constraints to high accuracies.  Direct methods are computationally expensive, requiring $O(n^{6})$ time and $O(n^{4})$ memory to solve (\ref{eq:newt}), and iterative methods are also ineffective, as they require $O(1/\mu)$ iterations to solve (\ref{eq:newt}) due to the ill-conditioned in $W$. In contrast, our method reformulates (\ref{eq:newt}) into a well-conditioned indefinite system that can be solved in $O(n^3r^3)$ time and $O(n^2r^2)$ memory, ensuring rapid convergence to a high accuracy solution.

All of our experiments are performed on an Apple laptop in MATLAB R2021a, running a silicon M1 pro chip with 10-core CPU, 16-core GPU, and 32GB of RAM. We implement our proposed method, which we denoted as \texttt{Our method}, by modifying the source code of \texttt{SeDuMi}~\cite{sturm1999sedumi} to use (\ref{eq:augrec}) to solve (\ref{eq:Newt-eps}) while keeping all other parts unchanged. Specifically, at each iteration, we decompose the scaling point $W$ as in (\ref{eq:EQSigdef}) with the rank parameter $r=\arg\max_{i\leq \hat r}\lambda_i(W)/\lambda_{i+1}(W)$ for some fixed $\hat r$, and then solve (\ref{eq:augrec}) using \texttt{PCG} as in Corollary~\ref{cor:pcg} with $\beta$ chosen as the square of median eigenvalue of $E$. We choose to modify \texttt{SeDuMi} in order to be consistent with the prior approach of Zhang and Lavaei \cite{zhang2017modified}, which we denoted as \texttt{ZL17}.

In Section~\ref{sec:verify_thm}, we empirically verify our theoretical results using RMC and SNL problems, which include Assumptions~\ref{asm:analy}, \ref{asm:inject} and \ref{asm:mvp2}; the well-conditioned indefinite system predicted in Theorem~\ref{thm:main}; the bounded \texttt{pcg} iterations predicted in Corollary~\ref{cor:pcg}; and the cubic time complexity. In Section~\ref{sec:comparison}, we compare the practical performance of \texttt{Our method} for solving RMC and SNL problems.

\paragraph{Robust matrix completion.}
Given a linear operator $\AA:\R^{n\times n}\to\R^{m}$, the task of robust matrix completion~\cite{candes2011robust,ma2023global,ding2021rank} seeks to recover a $n\times n$ low-rank matrix $X^\star$ from highly corrupted measurements $b=\AA(X^\star)+\varepsilon$, where $\varepsilon\in\R^{m}$ is a sparse outlier noise, meaning that its nonzero entries can have arbitrary large magnitudes. RMC admits the following optimization problem
\begin{equation}
\min_{X\in\R^{n\times n}}\quad\|\AA(X)-b\|_{1}+\lambda\|X\|_{*},\label{eq:rmc}
\end{equation}
where the $\ell_{1}$ norm $\|\varepsilon\|_{1}\equiv\sum_{i}|\varepsilon_{i}|$ is used to encourage sparsity in $\varepsilon$, the nuclear norm regularizer $\|X\|_{*}\equiv\sum_{i}\sigma_{i}(X)$ is used to encourage lower rank in $X$, and $\lambda > 0$ is the regularization parameter. In order to turn (\ref{eq:rmc}) into the standard form SDP, we focus on symmetric, positive semidefinite variance of (\ref{eq:rmc}), meaning that $X\succeq 0$, and the problem is written
\begin{equation}
\min_{X\in\S^n,v,w\in\R^{m}}\quad \mathbf{1}^{T}v + \mathbf{1}^{T}w + \lambda\cdot\tr(X)\quad\text{s.t.}\quad\begin{array}{c}
v-w+\mathcal{A}(X)=b,\\
X\succeq 0,\  v\ge0,\  w\ge0.
\end{array}\label{eq:rmc-sdp}
\end{equation}

In the remainder of this section, we set $\lambda=1$ and $\AA$ according to the ``matrix completion'' model, meaning that $\mathcal{A}(X)=P_{m}\svec(X)$ where $P_{m}$ is the matrix that randomly permutes and subsamples $m$ out of $n(n+1)/2$ elements in $\svec(X)$. In each case, the ground truth is chosen as $X^{\star}=GG^{T}$ in which $G\in\R^{n\times r^\star}$ is orthogonal, and its elements are sampled as $\vector(G)\sim\mathcal{N}(0,10^2I_{nr^\star})$. We fix $r^\star=2$ in all simulations. A small number $m_{o}$ of Gaussian outliers $\varepsilon$ are added to the measurements, meaning that $b=\AA(X^{\star})+\varepsilon$ where $\varepsilon_{i}=0$ except $m_{o}$ randomly sampled elements with $\varepsilon_{i}\overset{\mathrm{i.i.d.}}{\sim}\mathcal{N}(0,10^{4})$.

\paragraph{Sensor Network Localization.} 
The task of sensor network localization is to recover the locations of $n$ sensors $y_1^\star,\dots,y_n^\star\in\R^{d}$ given the location of $k$ anchors $a_1,\dots,a_k\in\R^{d}$; the Euclidean distance $\bar \delta_{ij}$ between $y_i^\star$ and $a_j$ for some $i,j$; and the Euclidean distance $\delta_{ij}$ between $y_i^\star$ and $y_j^\star$ for some $i\neq j$. Formally
\begin{align}
\begin{split}
    \min_{y_1,\ldots,y_n} 0 \quad\text{s.t.}\quad \|y_i-y_j\|=\delta_{ij}\text{ for $i,j\in N_y$},\ \|a_i-y_j\|=\bar \delta_{ij}\text{ for $i,j\in N_a$},
    \end{split}\label{eq:snl}
\end{align}
where $N_a$ and $N_y$ denote the set of indices $(i,j)$ for which $\bar \delta_{ij}$ and $\delta_{ij}$ is specified, respectively. In general, (\ref{eq:snl}) is a nonconvex problem and difficult to solve directly, but its solution can be approximated through the following SDP relaxation \cite{alfakih1999solving,biswas2004semidefinite}
\begin{align}
\begin{split}
    \min_{X\in\S^{n+d}} & \ 0 \\
    \text{s.t }\ & \begin{bmatrix}0\\e_i-e_j\end{bmatrix}^T\begin{bmatrix}I_d&Y\\Y^T&Z\end{bmatrix}\begin{bmatrix}0\\e_i-e_j\end{bmatrix}=\delta_{ij}^2\quad\text{for $i,j\in N_y$},\\
    & \begin{bmatrix}a_i\\-e_j\end{bmatrix}^T\begin{bmatrix}I_d&Y\\Y^T&Z\end{bmatrix}\begin{bmatrix}a_i\\-e_j\end{bmatrix}=\bar \delta_{ij}^2\quad\text{for $i,j\in N_a$},\\
    &X=\begin{bmatrix}I_d&Y\\Y^T&Z\end{bmatrix}\succeq 0.
\end{split}\label{eq:snl1}
\end{align}

The sensor locations can be exactly recovered through $Y^\star=[y_1^\star,\ldots,y_n^\star]$ when the SDP relaxation (\ref{eq:snl}) is tight, which occurs if and only if $\rank(X)=d$ \cite{so2007theory}. In this experiment, we also consider a setting of SNL problem for which the outliers are presented, meaning that the distance measurements $\delta_{ij}=\|y_i^\star-y_j^\star+\varepsilon_{ij}\|$ and $\bar \delta_{ij}=\|a_i-y_j^\star+\bar\varepsilon_{ij}\|$ are corrupted by outlier noise $\varepsilon_{ij}$ and $\bar\varepsilon_{ij}$. Here, $\varepsilon_{ij}$ and $\bar\varepsilon_{ij}$ are nonzero only for a very small subset of pairs $(i,j)\in N_y$ and $(i,j)\in N_a$, respectively. The SNL problem with outlier measurements can be solved via the following SDP

\begin{align}
\begin{split}
    \min_{\substack{X\in\S^{n+d},\\u_{ij},v_{ij},\bar u_{ij},\bar v_{ij}\in\R}} & \ \sum_{(i,j)\in N_y} (v_{ij}+u_{ij}) + \sum_{(i,j)\in N_a} 
    (\bar v_{ij}+\bar u_{ij}) \\
    \text{s.t }\ & \begin{bmatrix}0\\e_i-e_j\end{bmatrix}^T\begin{bmatrix}I_d&Y\\Y^T&Z\end{bmatrix}\begin{bmatrix}0\\e_i-e_j\end{bmatrix} + v_{ij} - u_{ij}=\delta_{ij}^2\quad\text{for $i,j\in N_y$},\\
    & \begin{bmatrix}a_i\\-e_j\end{bmatrix}^T\begin{bmatrix}I_d&Y\\Y^T&Z\end{bmatrix}\begin{bmatrix}a_i\\-e_j\end{bmatrix} + \bar v_{ij} - \bar u_{ij} =\bar \delta_{ij}^2\quad\text{for $i,j\in N_a$},\\
    &X=\begin{bmatrix}I_d&Y\\Y^T&Z\end{bmatrix}\succeq 0,\ u_{ij}\geq 0,\ v_{ij}\geq 0,\ \bar u_{ij}\geq 0,\ \bar v_{ij}\geq 0.
\end{split}\label{eq:snl-sdp}
\end{align}

In the remainder of this section, we generate ground truth sensor locations $y_1^\star,\ldots,y_n^\star\in\R^{d}$ and anchor locations $a_1,\ldots,a_k\in\R^d$ from the standard uniform distribution. We fix $d=2$ and $k=3$ for all simulations. We randomly select $m_y$ out of $n(n-1)/2$ indices $(i,j)$ to be in $N_y$, and $m_a$ out of $nk$ indices $(i,j)$ to be in $N_a$. Gaussian outliers $\varepsilon$ and $\bar\varepsilon$ are added to the distance measurement $\delta_{ij}=\|y_i^\star-y_j^\star+\varepsilon_{ij}\|$ for all $(i,j)\in N_y$ and $\bar \delta_{ij}=\|a_i-y_j^\star+\bar\varepsilon_{ij}\|$ for all $(i,j)\in N_a$, respectively. Here,  $\varepsilon_{ij}=0$ except $m_{yo}$ randomly sampled elements with $\varepsilon_{ij}\overset{\mathrm{i.i.d.}}{\sim}\mathcal{N}(0,1)$, and $\bar\varepsilon_{ij}=0$ except $m_{ao}$ randomly sampled elements with $\bar\varepsilon_{ij}\overset{\mathrm{i.i.d.}}{\sim}\mathcal{N}(0,1)$. For simplicity, we use $m=m_y+m_a$ to denote the number of constraints and $m_o=m_{yo}+m_{ao}$ to denote the number of outliers in (\ref{eq:snl-sdp}).

\subsection{Validation of our theoretical results}\label{sec:verify_thm}
\paragraph{Assumptions~\ref{asm:analy} and \ref{asm:inject}.}
We start by verifying Assumptions~\ref{asm:analy} and \ref{asm:inject} hold for both RMC (\ref{eq:rmc-sdp}) and SNL (\ref{eq:snl-sdp}) problems. In Figure~\ref{fig:asm}, we empirically verify both assumptions using a small-scaled RMC problem, which is generated with $(n,m,m_o)=(50,1200,20)$, and a small-scaled SNL problem, which is generated with $(n,m,m_o)=(50,1300,25)$. As shown in Figure~\ref{fig:asm}, both assumptions hold for the RMC and SNL problems with $(\delta,L,\chi_2)=(0.9,10,2.7)$ and $(\delta,L,\chi_2)=(0.9,5,30)$, respectively. For these results, we run \texttt{Our method} and extract the primal variable $X$, dual variable $S$, and scaling point $W$ at each iteration. For each $X$, $S$ and $W$, the corresponding $\mu$ is chosen such that $\|\frac{1}{\mu}X^{1/2}SX^{1/2}-I\|=0.1$, and the corresponding matrix $\Q$ is constructed as in (\ref{eq:EQSigdef}) with the rank parameter $r=\arg\max_{i\leq 5}\lambda_i(W)/\lambda_{i+1}(W)$.

\begin{figure}[b!]
    \center
    \vspace{-0.5em}
    \includegraphics[width=\textwidth]{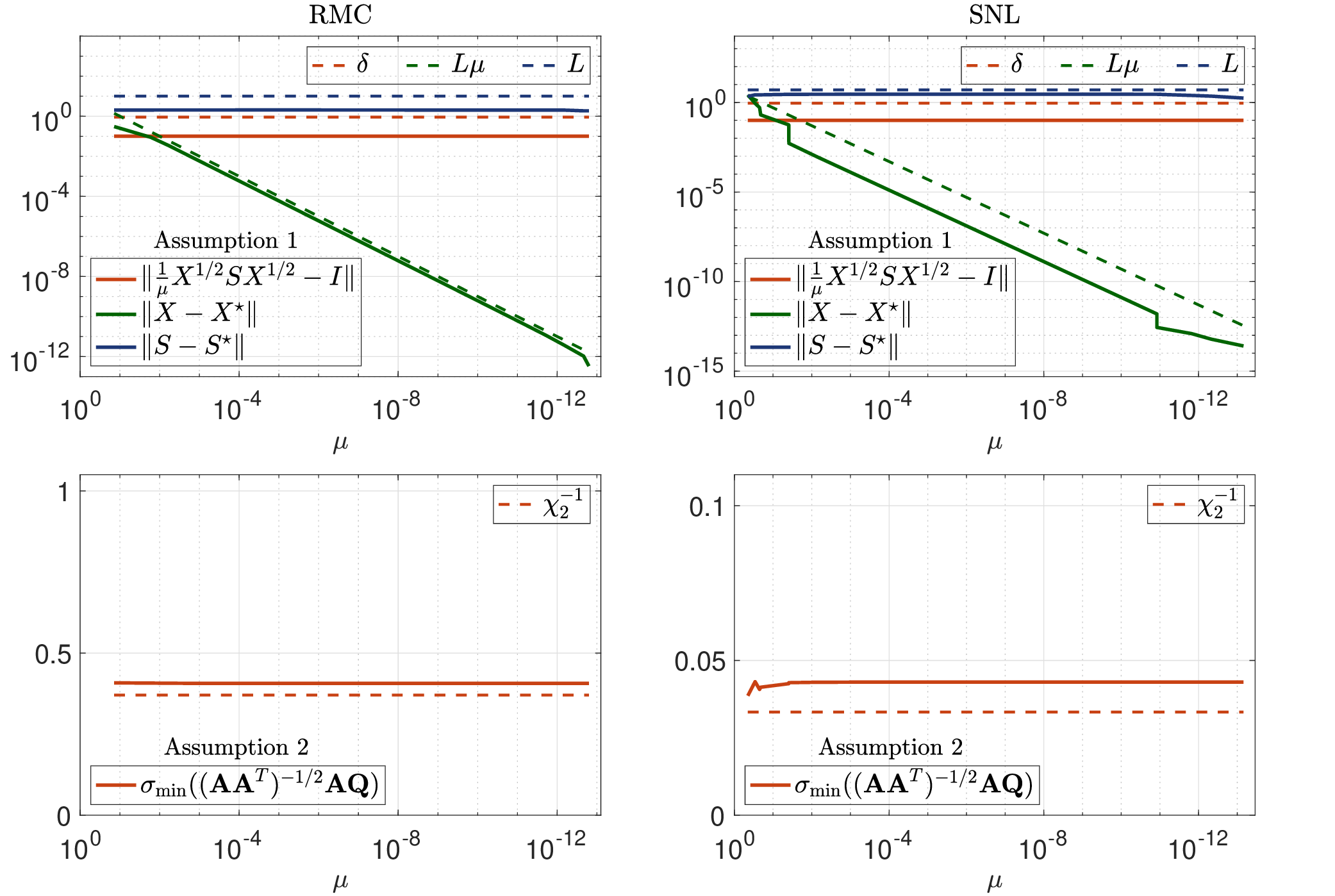}
    \vspace{-2em}
	\caption{Verifying Assumption~\ref{asm:analy}: $\|\frac{1}{\mu}X^{1/2}SX^{1/2}-I\|\leq \delta$, $\|X-X^\star\|\leq L\mu$, $\|S-S^\star\|\leq L$, and Assumption~\ref{asm:inject}: $\min_{x} \|(\A\A^T)^{-1/2}\A\Q x\|/\|\Q x\|\geq \chi_2^{-1}$. \textbf{Left.} Both assumptions hold for RMC with $(\delta,L,\chi_2)=(0.9,10,2.7)$. \textbf{Right.} Both assumptions hold for SNL with $(\delta,L,\chi_2)=(0.9,5,30)$.}\label{fig:asm}
    \vspace{-0.5em}
\end{figure} 

\paragraph{Bounded condition number and \texttt{PCG} iterations.}
As the two assumptions hold for both problems, Theorem~\ref{thm:main} proves that the condition number of the corresponding indefinite system (\ref{eq:augrec}) is bounded and independent of $\mu$. Additionally, Corollary~\ref{cor:pcg} shows that the indefinite system can be efficiently solved using \texttt{PCG}, and the number of iterations required to satisfy residual condition (\ref{eq:Newt-eps}) is independent of $1/\mu$.

In Figure~\ref{fig:thm}, we plot the condition number of our indefinite system (\ref{eq:augrec}) and the KKT equations (\ref{eq:Newt-eps}), as well as the number of \texttt{PCG} iterations required to satisfy residual condition (\ref{eq:Newt-eps}) with respect to different values of $\mu$. We use the same instance of RMC (\ref{eq:rmc-sdp}) and SNL (\ref{eq:snl-sdp}) in Figure~\ref{fig:asm}. For each problem, we extract $W$ and $\mu$ as in Figure~\ref{fig:asm}. For each pair of $W$ and $\mu$, the corresponding indefinite system is constructed as in (\ref{eq:augrec}) using the rank parameter $r=\arg\max_{i\leq 5}\lambda_i(W)/\lambda_{i+1}(W)$, and the corresponding KKT equations are constructed as in (\ref{eq:Newt-eps}). We run \texttt{PCG} to solve the indefinite system until the residual condition (\ref{eq:Newt-eps}) is satisfied with $\epsilon=10^{-6}$. In both cases, we see that the condition number of the indefinite system is upper bounded, while the condition number of the KKT equations (\ref{eq:Newt-eps}) explodes as $\mu\to 0$. (We note that it becomes stuck at $10^{25}$ due to finite precision.) The number of \texttt{PCG} iterations for satisfy residual condition (\ref{eq:Newt-eps}) is also bounded and independent of $1/\mu$.

\begin{figure}[b!]
	\center
	\vspace{-0.5em}
    \includegraphics[width=\textwidth]{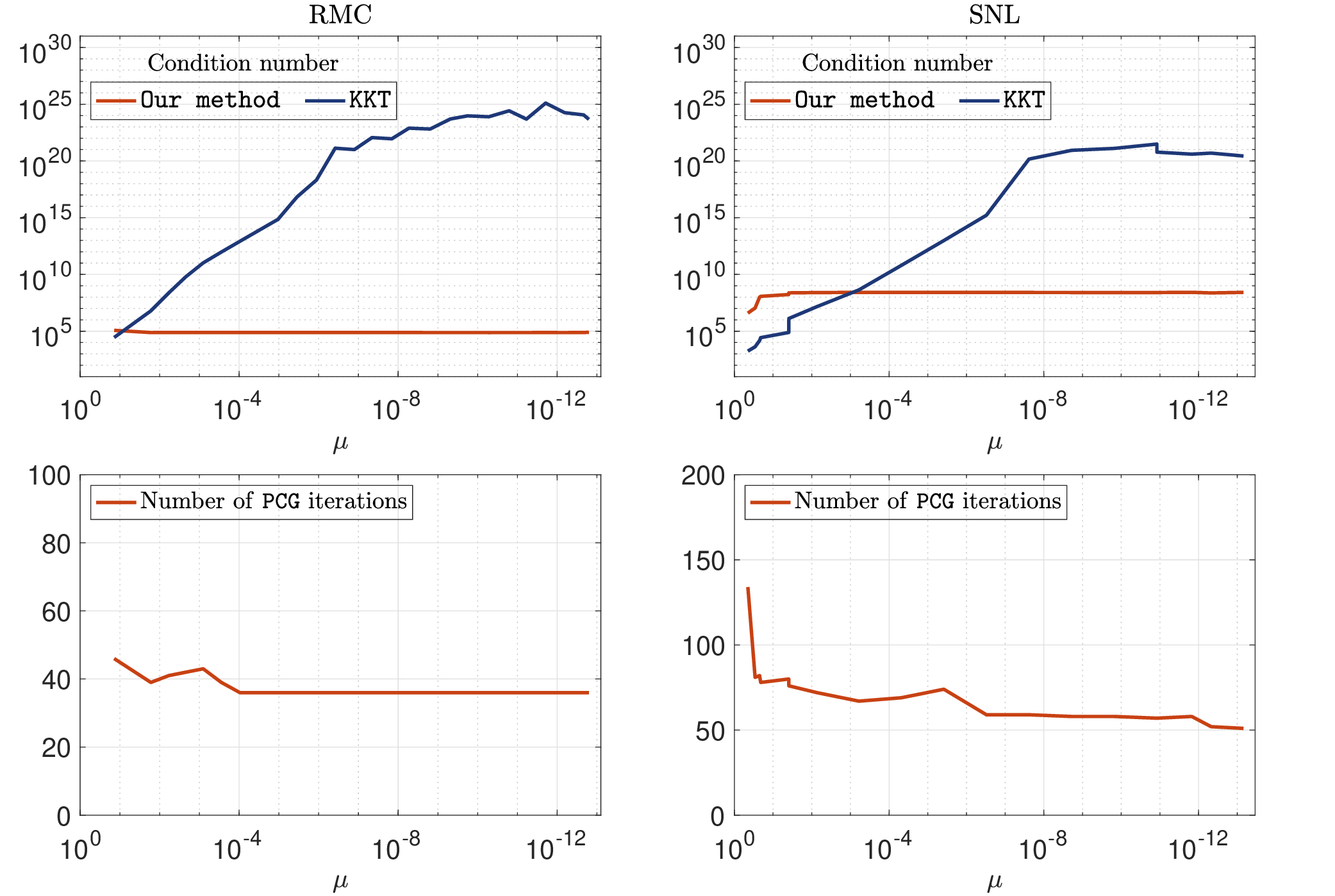}
    \vspace{-2em}
	\caption{Verifying Theorem~\ref{thm:main}: the condition number of the indefinite system~(\ref{eq:augrec}) is bounded and independent of $\mu$, and Corollary~\ref{cor:pcg}: the number of \texttt{PCG} iterations required to satisfy residual condition (\ref{eq:Newt-eps}) is bounded and independent of $1/\mu$. \textbf{Left.} RMC. \textbf{Right.} SNL.}\label{fig:thm}
	\vspace{-0.5em}
\end{figure}

\paragraph{Cubic time complexity.} Finally, we verify \texttt{Our method} achieves cubic time complexity when Assumption~\ref{asm:mvp2} holds and $r\ll n$, with no dependence on the number of constraints $m$. We first prove both RMC (\ref{eq:rmc-sdp}) and SNL (\ref{eq:snl-sdp}) satisfy Assumption~\ref{asm:mvp2}. Given any $U,V\in\R^{n\times r}$ and $y\in\R^m$. For RMC, it is obvious that both $\AA(UV^T+VU^T)=P_{m}\svec(UV^T+VU^T)$ and $\svec(\AA^T(y))=P_m^Ty$ can be evaluated in $O(n^2r)$ time and $O(n^2)$ storage. For SNL, by first paying $O(kr^2)$ time to calculate $a_1a_1^T,\ldots,a_ka_k^T$, observe that 

\begin{gather*}
    \begin{bmatrix}0\\e_i-e_j\end{bmatrix}^T\left(\begin{bmatrix}I\\U\end{bmatrix}\begin{bmatrix}I\\V\end{bmatrix}^T+\begin{bmatrix}I\\V\end{bmatrix}\begin{bmatrix}I\\U\end{bmatrix}^T\right)\begin{bmatrix}0\\e_i-e_j\end{bmatrix},\quad \begin{bmatrix}0\\e_i-e_j\end{bmatrix}\begin{bmatrix}0\\e_i-e_j\end{bmatrix}^T\\
    \begin{bmatrix}a_i\\-e_j\end{bmatrix}^T\left(\begin{bmatrix}I\\U\end{bmatrix}\begin{bmatrix}I\\V\end{bmatrix}^T+\begin{bmatrix}I\\V\end{bmatrix}\begin{bmatrix}I\\U\end{bmatrix}^T\right)\begin{bmatrix}a_i\\-e_j\end{bmatrix},\quad \begin{bmatrix}a_i\\-e_j\end{bmatrix}\begin{bmatrix}a_i\\-e_j\end{bmatrix}^T
\end{gather*}
can all be evaluated in $O(r)$ time. (Notice that $r=d$ in SNR.) Therefore, when $k\ll n$, which is common in real-world applications where there are typically more sensors than anchors, both $\AA(X)$ and $\AA^T(y)U$ can be evaluated in $O(n^2r)$ time and $O(n^2)$ storage.
\begin{figure}[b!]
	\center
    \vspace{-0.5em}
    \includegraphics[width=\textwidth]{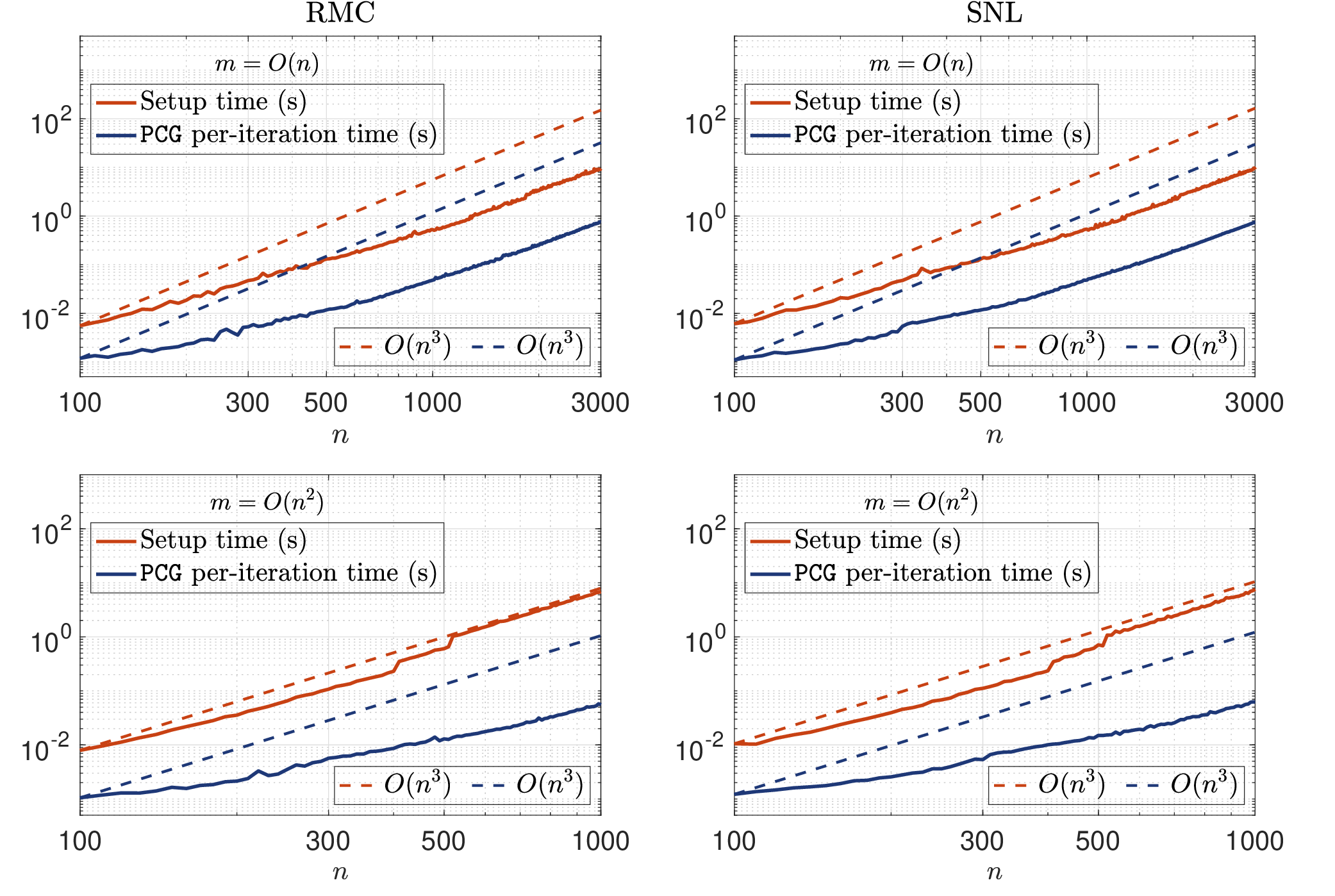}
    \vspace{-2em}
	\caption{Verifying cubic time complexity for \texttt{Our method}: both setting up the indefinite system (\ref{eq:augrec}) and one iteration of \texttt{PCG} cost $O(n^3)$ time, with no dependency on the number of constraints $m$. \textbf{Left.} RMC with $m=O(n)$ and $m=O(n^2)$. \textbf{Right.} SNL with $m=O(n)$ and $m=O(n^2)$.}\label{fig:runtime}
	\vspace{-0.5em}
\end{figure}

In Figure~\ref{fig:runtime}, we plot the average setup time for the indefinite system (\ref{eq:augrec}) and the average per \texttt{PCG} iteration time against $n$ for both RMC and SNL problem. Specifically, the setup time includes: the time to decompose $W=Q\Lambda Q^T+Q_\perp\Lambda_\perp Q_\perp^T$ and construct $E=QQ^T+\tau^{-1}Q_\perp\Lambda_\perp Q_\perp^T$ in order to implicitly setup $\Q$, $\E$, and $\Sig$ as in (\ref{eq:EQSigdef}); explicitly form the Schur complement $\C = \tau^2\Sig + \beta^{-1}\Q^T\A^T\A\Q$ via $O(nr)$ matrix-vector product with $\A\Q$ and $\Q^T\A^T$; and compute the Cholesky factor of $\C$. We consider two cases, $m=O(n)$ and $m=O(n^2)$, in order to verify the time complexity of \texttt{Our method} is indeed invariant to $m$. For both problem, we set the number of constraints to be $m=20n$ for the case of $m=O(n)$, and $m=n(n-1)/2$ for the case of $m=O(n^2)$. As we are interested in measuring the run time of \texttt{Our method} for solving the standard form SDP (\ref{eq:sdp}), we fix $m_o=0$ to eliminate all the LP constraint in (\ref{eq:rmc-sdp}) and (\ref{eq:snl-sdp}). As shown in Figure~\ref{fig:runtime}, the setup time and per \texttt{PCG} time exhibit cubic time complexity for both problems.

\subsection{Comparison against other approaches}\label{sec:comparison}
\begin{table}[!b]
\footnotesize
\vspace{-1em}
\caption{Reconstruction error (Error) and runtime (Time) for solving rank-$2$ size $n\times n$ RMC problems with $m$ constraints and $m_o$ outliers.}\label{table:rmc}
\begin{center}
\resizebox{0.95\linewidth}{!}{%
\begin{tabular}{c@{\hskip 4pt}c@{\hskip 4pt}c c@{\hskip 6pt}c c@{\hskip 6pt}c c@{\hskip 6pt}c c@{\hskip 6pt}c}
	\toprule  
	\multicolumn{1}{c}{\small$n$} & \multicolumn{1}{c}{\small$m$} & \multicolumn{1}{c}{\small$m_o$} & \multicolumn{2}{c}{\small\texttt{Our method}} & \multicolumn{2}{c}{\small\texttt{MOSEK}} & \multicolumn{2}{c}{\small\texttt{ZL17}} & \multicolumn{2}{c}{\small\texttt{ADMM}}\\
   \midrule
  && & \footnotesize{Error} & \footnotesize{Time} & \footnotesize{Error} & \footnotesize{Time}&\footnotesize{Error} & \footnotesize{Time}&\footnotesize{Error} & \footnotesize{Time}\\
	\cmidrule(lr){4-5}\cmidrule(lr){6-7}\cmidrule(lr){8-9}\cmidrule(lr){10-11}
    100 &  3000 &  12 & $\bmexpm{1.2}{12}$ & 1.2s  & $\expm{2.3}{10}$ & 2.5s & $\expm{2.7}{06}$ & 1.6s & $\expm{7.4}{07}$ & 24.6s\\
    100 &  4000 &  24 & $\bmexpm{1.5}{12}$ & 1.0s  & $\expm{2.3}{11}$ & 5.3s & $\expm{2.2}{06}$ & 1.7s & $\expm{8.4}{07}$ & 25.9s\\
    100 &  5000 &  48 & $\bmexpm{1.9}{12}$ & 2.2s  & $\expm{1.5}{08}$ & 6.8s & $\expm{9.6}{06}$ & 2.2s & $\expm{7.5}{07}$ & 152s\\
    \midrule
    300 & 40000 &  50 & $\bmexpm{3.1}{12}$ & 21.7s & $\expm{2.6}{09}$ & 1853s & $\expm{1.6}{06}$ & 31.3s & $\expm{7.0}{07}$ & 84.2s\\
    300 & 42500 & 100 & $\bmexpm{4.2}{12}$ & 23.9s & $\expm{2.7}{07}$ & 2218s & $\expm{3.9}{06}$ & 42.0s & $\expm{8.9}{07}$ & 48.5s\\
    300 & 45000 & 150 & $\bmexpm{5.0}{12}$ & 32.1s & $\expm{3.6}{11}$ & 2310s & $\expm{7.4}{07}$ & 67.9s & $\expm{8.5}{07}$ & 50.1s\\
    \midrule
    500 & 115000 & 100 & $\bmexpm{7.0}{12}$ & 127s & \multicolumn{2}{c}{\scriptsize\texttt{Out of memory}} & $\expm{7.5}{06}$ & 154s & $\expm{6.7}{07}$ & 103s \\
    500 & 120000 & 200 & $\bmexpm{7.9}{12}$ & 138s & \multicolumn{2}{c}{\scriptsize\texttt{Out of memory}} & $\expm{1.7}{06}$ & 260s & $\expm{7.6}{07}$ & 180s \\
    500 & 125000 & 300 & $\bmexpm{7.4}{12}$ & 127s & \multicolumn{2}{c}{\scriptsize\texttt{Out of memory}} & $\expm{6.2}{06}$ & 233s & $\expm{8.0}{07}$ & 314s \\
    \midrule
    1000 & 400000 & 150 & $\bmexpm{1.3}{11}$ & 1011s & \multicolumn{2}{c}{\scriptsize\texttt{Out of memory}} & $\expm{1.5}{06}$ & 2253s & $\expm{7.5}{07}$ & 522s \\
    1000 & 450000 & 300 & $\bmexpm{1.1}{11}$ &  954s & \multicolumn{2}{c}{\scriptsize\texttt{Out of memory}} & $\expm{9.7}{07}$ & 1675s & $\expm{2.2}{07}$ & 594s \\
    1000 & 500000 & 450 & $\bmexpm{1.2}{11}$ & 1106s & \multicolumn{2}{c}{\scriptsize\texttt{Out of memory}} & $\expm{2.1}{05}$ & 1458s & $\expm{9.4}{07}$ & 1027s \\
	\bottomrule
\end{tabular}
}
\end{center}
\vspace{-2.5em}
\end{table}
In this section, we compare the practical performance of \texttt{Our method} for solving both robust matrix completion and sensor network localization problems. 
\paragraph{Robust matrix completion.}
\sloppy First-order methods, such as linearized alternating direction method of multipliers (\texttt{ADMM}) \cite{yuan2009sparse,cherapanamjeri2017nearly}, can be used to solve (\ref{eq:rmc}) via the following iterations:
\begin{align}
\begin{split}
X_{k+1}&=\mathcal{P}_r(X_k-\alpha\nabla f_\lambda(X_k,s_k,y_k)),\\
s_{k+1}&= \mathcal{S}_{\lambda/\beta}(\AA(X_{k+1})+y_{k}-b),\\
y_{k+1}&= y_{k} + \beta(\AA(X_{k+1})-s_{k+1}-b)
\end{split}\label{eq:admm}
\end{align}
where $\alpha=\frac{n^2}{m}$, $\beta$ is the step size, $f_\lambda(X,s,y)=\frac{\beta}{2}\|\AA(X)-b-s+y\|^2+\lambda\tr(X)$, $\mathcal P_r(X)$ denotes the projection of $X$ onto a set of rank-$r$ matrices, and $\mathcal S_{\gamma}(x)=\max\{x-\gamma,0\}+\min\{x+\gamma,0\}$ is the soft-thresholding operator. While fast, the convergence of \texttt{ADMM} is not guaranteed without additional assumption on $X^\star$, such as requiring $X^\star$ to be incoherent in the context of matrix completion. This makes it less reliable for obtaining high accuracy solutions. 

Alternatively, the Burer--Monteiro (BM) approach can be applied by substituting $X=UU^T$ where $U\in\R^{n\times r}$ and $r\geq \rank(X^\star)$. Unfortunately, as we explained in the introduction, the Burer--Monteiro approach is not able to solve the problem satisfactorily as it would either solve the problem by substituting $X=UU^T$ into (\ref{eq:rmc}) or (\ref{eq:rmc-sdp}). The former is an unconstrained nonsmooth problem, which can be solved via subgradient (\texttt{SubGD}) method $U_{k+1}=U_k-\alpha\nabla f(U_k)$ with step-size $\alpha\in(0,1]$ and $f(U)=\|\AA(UU^T)-b\|_1+\lambda\|U\|_F^2$. The latter is a constrained twice-differentiable problem, which can be solved via nonlinear programming (NLP) solver like \texttt{Fmincon}~\cite{matlabOTB} and \texttt{Knitro}~\cite{byrd2006k}. But in both cases, local optimization algorithms can struggle to converge, because the underlying nonsmoothness of the problem is further exacerbated by the nonconvexity of the BM factorization.

A promising alternative is to solve (\ref{eq:rmc-sdp}) via general-purpose IPM solver such as \texttt{MOSEK} \cite{mosek2015}. However, general-purpose IPM incurs at least $\Omega((n+m)^3)$ time and $\Omega((n+m)^2)$ memory, which is concerning for problem instances with $m=O(n^2)$ measurements. 

Instead, we present experimental evidence to show that \texttt{Our method} is the most reliable way of solving RMC to high accuracies. We start by demonstrating the efficiency and efficacy of \texttt{Our method} by solving a large-scaled RMC, which is generated with $(n,m,m_o)=(500,125250,250)$. In Figure~\ref{fig:main}, we plot the reconstruction error $\|X-X^\star\|_F$ against runtime for \texttt{Our method}, \texttt{ZL17}, \texttt{Fmincon}, \texttt{Knitro}, \texttt{ADMM} and \texttt{SubGD}. Here, the maximum rank parameter $\hat r$ for \texttt{Our method}, and the search rank $r$ for \texttt{Fmincon}, \texttt{Knitro}, \texttt{ADMM} and \texttt{SubGD} are all set to be $r=10$. We initialized \texttt{Fmincon}, \texttt{Knitro} and \texttt{SubGD} at the same initial point $U_0$ that is drawn from standard Gaussian, and initialized \texttt{ADMM} at $X_0=U_0U_0^T$. The step-size of \texttt{ADMM} and \texttt{SubGD} is set be $\beta=10^{-2}$ and $\alpha=10^{-6}$, respectively. From Figure~\ref{fig:main}, we see that \texttt{Our method} is the only one that is able to achieve reconstruction error below $10^{-8}$. Though both \texttt{ADMM} and \texttt{ZL17} achieves reconstruction error $10^{-6}$ at the fastest rate, they cannot make any further progress; \texttt{ADMM} becomes stuck and \texttt{ZL17} becomes ill-conditioned. Comparing to nonlinear programming solvers, both \texttt{Fmincon} and \texttt{Knitro} eventually reach the same reconstruction error similar to \texttt{ZL17} but at a significantly slower rate than \texttt{Our method}. Finally, \texttt{SubGD} converges extremely slowly for this problem.

\begin{figure}[!b]
    \center
	\vspace{-1em}
    \includegraphics[width=1\textwidth]{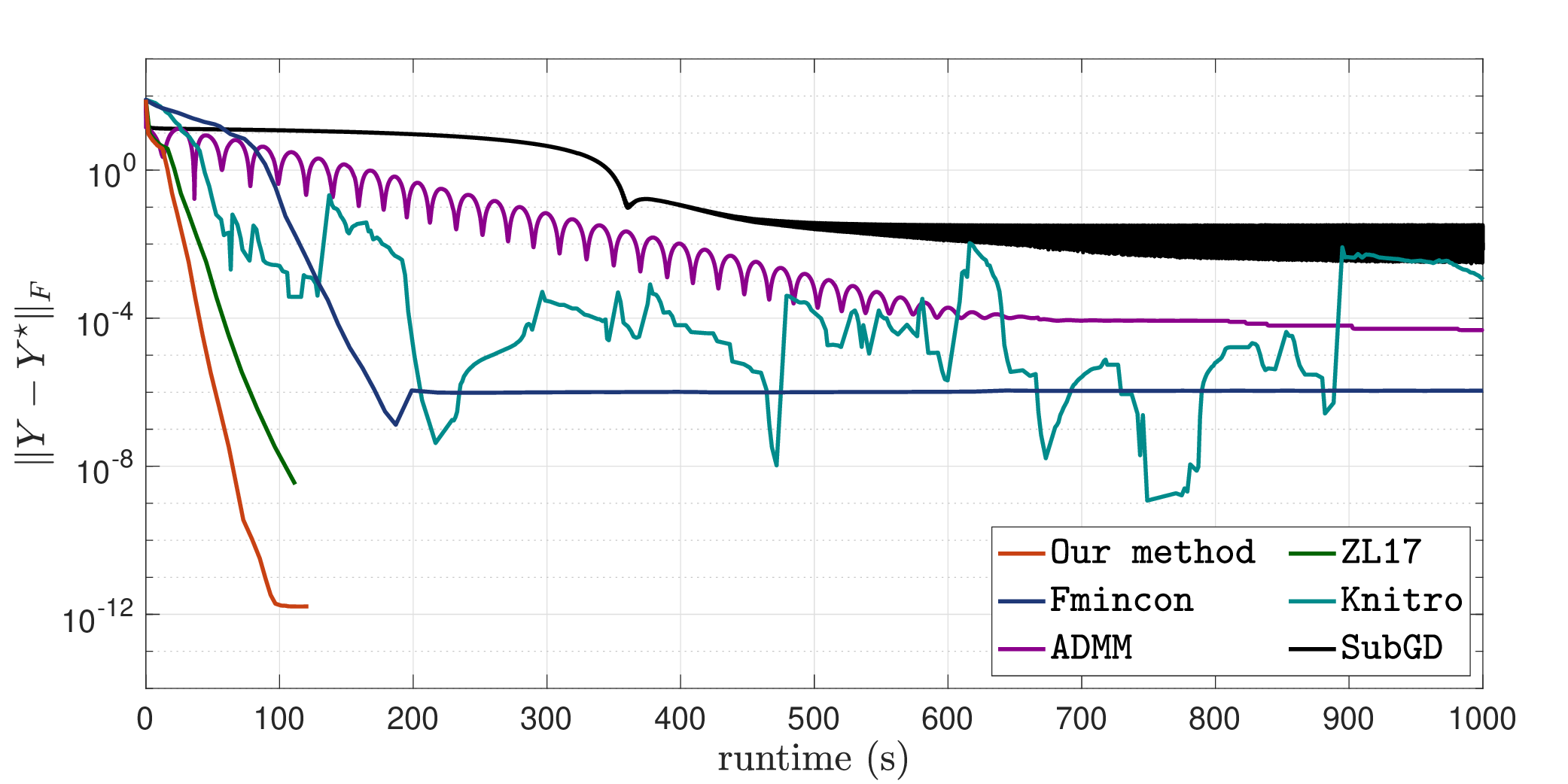}
    \caption{Reconstruction error $\|Y-Y^{\star}\|_{F}$ against runtime for solving an instance of SNL problem (\ref{eq:snl-sdp}) using \texttt{Our method}, \texttt{ZL17}, \texttt{Fmincon}, \texttt{Knitro}, \texttt{ADMM}, and \texttt{SubGD}. The SNL problem is generated with $(n,m,m_0)=(300,45000,300)$.}\label{fig:main_snl}
    \vspace{-1em}
\end{figure}

In Table~\ref{table:rmc}, we solve several instances of RMC problems using \texttt{Our method}, \texttt{MOSEK}, \texttt{ZL17} and \texttt{ADMM}, and report their corresponding reconstruction error and runtime. For space efficiency, we omit the comparison with \texttt{Knitro}, \texttt{Fmincon} and \texttt{SubGD} as they generally perform worse than \texttt{ADMM}. In this experiment, the maximum rank parameter $\hat r$ for \texttt{Our method} and the search rank $\hat r$ for \texttt{ADMM} are all set to be $10$. The number of iterations for \texttt{ADMM} is set to be $10^5$. For all four methods, we report the best reconstruction error and the time they achieve their best reconstruction error. From Table~\ref{table:rmc}, we see that \texttt{Our method} consistently achieves reconstruction error around $10^{-12}$ in all cases, with runtime similar to \texttt{ZL17}. For $n=500$ and larger, \texttt{MOSEK} runs out of memory. We note that we pick the largest possible step size for \texttt{ADMM} that allows it to stably converge to the smallest error.

\paragraph{Sensor network localization.}
We again demonstrate the efficiency and efficacy of \texttt{Our method} by solving a large-scale SNL problem, which is generated with $(n,m,m_o)=(300,45000,300)$. Similar to RMC, SNL can also be solved via \texttt{Fmincon}, \texttt{Knitro}, \texttt{ADMM} and \texttt{SubGD}. In particular, \texttt{ADMM} can be applied by first rewriting (\ref{eq:snl-sdp}) into the standard form $\min_{X\succeq 0} \|\AA(X)-b\|_1$, and then apply (\ref{eq:admm}) with $\lambda=0$. The \texttt{SubGD} method can subsequently be applied with the update $U_{k+1}=U_k-\alpha\nabla f(U_k)$, where $\alpha\in(0,1]$, $U\in\R^{n\times r}$, and $f(U)=\|\AA(UU^T)-b\|_1$. Finally, both \texttt{Fmincon} and \texttt{Knitro} can be applied by substituting $X=UU^T$ into (\ref{eq:snl-sdp}).

In Figure~\ref{fig:main_snl}, we plot the reconstruction error against runtime for \texttt{Our method}, \texttt{ZL17}, \texttt{Fmincon}, \texttt{Knitro}, \texttt{ADMM} and \texttt{SubGD}. The reconstruction error for SNL is defined as $\|Y-Y^\star\|_F$ where $Y^\star = [y_1^\star,\ldots,y_n^\star]$. Here, the maximum rank parameter $\hat r$ for \texttt{Our method}, and the search rank $r$ for \texttt{Fmincon}, \texttt{Knitro}, \texttt{ADMM} and \texttt{SubGD} are all set to be $r=10$. We initialized \texttt{Fmincon}, \texttt{Knitro} and \texttt{SubGD} at the same initial point $U_0$ that is drawn from standard Gaussian, and initialized \texttt{ADMM} at $X_0=U_0U_0^T$. The step-size of \texttt{ADMM} and \texttt{SubGD} is set be $\beta=0.03$ and $\alpha=5\times10^{-6}$, respectively. From Figure~\ref{fig:main_snl}, we again see that \texttt{Our method} is the only one that is able to achieve reconstruction error around $10^{-12}$ at the fastest rate.

In Table~\ref{table:snl}, we solve several instances of SNL problems using \texttt{Our method}, \texttt{MOSEK}, \texttt{ZL17} and \texttt{Knitro}, and report their corresponding reconstruction error and runtime. For simplicity, we omit the comparison with \texttt{ADMM}, \texttt{Fmincon}, and \texttt{SubGD} as they generally perform worse than \texttt{Knitro}. In this experiment, the maximum rank parameter $\hat r$ for \texttt{Our method} and the search rank $\hat r$ for \texttt{Knitro} are all set to be $10$. The number of iterations for \texttt{Knitro} is set to be $500$. For all four method, we report the best reconstruction error and the time they achieve their best reconstruction error. As shown in Table~\ref{table:snl}, \texttt{Our method} again consistently achieves reconstruction error around $10^{-12}$ in all cases, with runtime similar to \texttt{ZL17}. For $n=500$ and larger, \texttt{MOSEK} runs out of memory.

\begin{table}[!hb]
\footnotesize
\caption{Reconstruction error (Error) and runtime (Time) for solving 2-dimensional SNL problems with $n$ sensors, 3 anchors, $m$ distance measurements and $m_o$ outliers.}\label{table:snl}
\begin{center}
\resizebox{0.95\linewidth}{!}{%
\begin{tabular}{c@{\hskip 4pt}c@{\hskip 4pt}c c@{\hskip 6pt}c c@{\hskip 6pt}c c@{\hskip 6pt}c c@{\hskip 6pt}c}
	\toprule  
	\small$n$ & \small$m$ & \small$m_o$ & \multicolumn{2}{c}{\small\texttt{Our method}} & \multicolumn{2}{c}{\small\texttt{MOSEK}} & \multicolumn{2}{c}{\small\texttt{ZL17}} & \multicolumn{2}{c}{\small\texttt{Knitro}}\\
    \midrule
    &&& \footnotesize{Error} & \footnotesize{Time} & \footnotesize{Error} & \footnotesize{Time}&\footnotesize{Error} & \footnotesize{Time} & \footnotesize{Error} & \footnotesize{Time}\\
	\cmidrule(lr){4-5}\cmidrule(lr){6-7}\cmidrule(lr){8-9}\cmidrule(lr){10-11}
     50 &   1000 &  30 & $\bmexpm{2.9}{13}$ & 1.0s & $\expm{1.1}{09}$ & 0.6s & $\expm{5.6}{10}$ & 1.1s & $\expm{4.5}{10}$ & 7.0s \\
     50 &   1150 &  40 & $\bmexpm{2.4}{13}$ & 0.7s & $\expm{9.3}{13}$ & 0.5s & $\expm{1.2}{09}$ & 0.8s & $\expm{4.3}{10}$ & 9.2s \\
     50 &   1300 &  50 & $\bmexpm{1.7}{13}$ & 0.6s & $\expm{4.5}{12}$ & 0.7s & $\expm{8.8}{10}$ & 0.8s & $\expm{4.6}{10}$ & 14.6s \\
    \midrule
	100 &   3000 &  60 & $\bmexpm{4.3}{13}$ & 2.2s & $\expm{1.2}{12}$ & 3.2s & $\expm{3.0}{09}$ & 3.1s & $\expm{8.4}{09}$ & 64.4s \\
    100 &   4000 &  80 & $\bmexpm{8.5}{13}$ & 2.9s & $\expm{1.1}{10}$ & 4.4s & $\expm{3.8}{09}$ & 3.4s & $\expm{3.9}{11}$ & 53.7s \\
    100 &   5000 & 100 & $\bmexpm{4.2}{13}$ & 4.1s & $\expm{1.3}{10}$ & 7.8s & $\expm{1.0}{08}$ & 3.8s & $\expm{2.1}{08}$ & 59.2s \\
    \midrule
    300 &  40000 & 220 & $\bmexpm{2.2}{12}$ & 108.5s & $\expm{5.6}{10}$ & 1801s & $\expm{1.8}{09}$ & 136.2s & $\expm{1.5}{09}$ & 415.4s \\
    300 &  42500 & 260 & $\bmexpm{1.7}{12}$ &  95.3s & $\expm{3.7}{11}$ & 2439s & $\expm{1.5}{09}$ & 119.3s & $\expm{2.7}{09}$ & 275.9s \\
    300 &  45000 & 300 & $\bmexpm{1.6}{12}$ & 117.5s & $\expm{2.5}{11}$ & 3697s & $\expm{3.3}{09}$ & 111.7s & $\expm{1.2}{09}$ & 749.0s\\
    \midrule
    500 & 100000 & 420 & $\bmexpm{5.9}{12}$ & 499.3s & \multicolumn{2}{c}{\scriptsize\texttt{Out of memory}} & $\expm{1.5}{09}$ & 592.8s & $\expm{9.9}{10}$ & 2548s \\
    550 & 110000 & 460 & $\bmexpm{2.6}{12}$ & 559.4s & \multicolumn{2}{c}{\scriptsize\texttt{Out of memory}} & $\expm{6.8}{09}$ & 747.7s & $\expm{4.1}{09}$ & 1081s \\
    500 & 120000 & 500 & $\bmexpm{6.0}{12}$ & 760.5s & \multicolumn{2}{c}{\scriptsize\texttt{Out of memory}} & $\expm{2.1}{08}$ & 955.5s & $\expm{1.2}{09}$ & 595.0s\\
	\bottomrule
\end{tabular}
}
\end{center}
\end{table}

\newpage
\bibliographystyle{siam}
\bibliography{arxiv.bib}

\newpage
\appendix

\section{Proof of Proposition~\ref{prop:correct}} \label{app:correctness}

We break the proof into three parts. Below, recall that we have defined
$\Psi_{n}$ as the basis matrix for $\vector(\S^{n})$ on $\vector(\R^{n\times n})=\R^{n^{2}}$,
in order to define $\svec(X)\eqdef\Psi_{n}^{T}\vector(X)=\frac{1}{2}\Psi_{n}^{T}\vector(X+X^{T})$
for $X\in\R^{n\times n}$, and $A\skron B=B\skron A\eqdef\frac{1}{2}\Psi_{n}^{T}(A\otimes B+B\otimes A)\Psi_{r}$
for $A,B\in\R^{n\times r}$.
\begin{lemma}[Correctness]
Let $W=Q\Lambda Q^{T}+Q_{\perp}\Lambda_{\perp}Q_{\perp}^{T}$ where
$[Q,Q_{\perp}]$ is orthonormal, and pick $0<\tau<\lambda_{\min}(\Lambda)$.
Define $E=QQ^{T}+\tau^{-1}\cdot Q_{\perp}\Lambda_{\perp}Q_{\perp}^{T},$
$\Q_{B}=Q\skron Q,$ $\Q_{N}=\sqrt{2}\Psi_{n}^{T}(Q\otimes Q_{\perp}),$
$\Sig_{B}^{-1}=\Lambda\skron\Lambda-\tau^{2}I,$ and $\Sig_{N}^{-1}=(\Lambda-\tau I)\otimes\Lambda_{\perp}$.
Then 
\[
W\skron W=[\Q_{B},\Q_{N}]\begin{bmatrix}\Sig_{B}\\
 & \Sig_{N}
\end{bmatrix}^{-1}[\Q_{B},\Q_{N}]^{T}+\tau^{2}\cdot E\skron E.
\]
\end{lemma}

\begin{proof}
Write $W_{X}=\tau\cdot Q(\Lambda-\tau I)Q^{T}$ and $W_{S}=\tau^{-1}Q_{\perp}\Lambda_{\perp}Q_{\perp}^{T}$.
For arbitrary $Y\in\S^{n}$, it follows from $W=\tau E+\tau^{-1}W_{X}$
that
\begin{align*}
\Psi_{n}^{T}(W\otimes W)\Psi_{n}\svec(Y)= & \svec((\tau E+\tau^{-1}W_{X})Y(\tau E+\tau^{-1}W_{X}))\\
= & \svec\left[\tau^{2}\cdot EYE+2\cdot EYW_{X}+\tau^{-2}\cdot W_{X}YW_{X}\right].
\end{align*}
Now, substituting $E=QQ^{T}+W_{S}$ yields our desired claim:
\begin{align*}
= & \svec\left[\tau^{2}\cdot EYE+2\cdot W_{S}YW_{X}+\left(2QQ^{T}+\tau^{-2}W_{X}\right)YW_{X}\right]\\
= & \tau^{2}\cdot\underbrace{\svec(EYE)}_{\E y}+\underbrace{\svec(2W_{S}YW_{X})}_{\Q_{N}\Sig_{N}^{-1}\Q_{N}^{T}y}+\underbrace{\svec\left[\left(2QQ^{T}+\tau^{-2}W_{X}\right)YW_{X}\right]}_{\Q_{B}\Sig_{B}^{-1}\Q_{B}^{T}y}.
\end{align*}
\sloppy Indeed, we verify that $\vector(2W_{S}YW_{X})=(\sqrt{2}Q\otimes Q_{\perp})[(\Lambda-\tau I)\otimes\Lambda_{\perp}](\sqrt{2}Q\otimes Q_{\perp})^{T}\vector(Y)=\Q_{N}[(\Lambda-\tau I)\otimes\Lambda_{\perp}]\Q_{N}^{T}\vector(Y)$
and $\svec\left[\left(2QQ^{T}+\tau^{-2}W_{X}\right)YW_{X}\right]=\Q_{B}[(\Lambda+\tau I)\skron(\Lambda-\tau I)]\Q_{B}^{T}\svec(Y)$,
where we used $(A+B)\skron(A-B)=A\skron A-B\skron B$ because $A\skron B=B\skron A$. 
\end{proof}
\begin{lemma}[Column span of $\Q$]
\label{lem:spanQ}Let $[Q,Q_{\perp}]$ be orthonormal with $Q\in\R^{n\times r}$.
Define $\Q=[\Q_{B},\Q_{N}]$ where $\Q_{B}=Q\skron Q$ and $\Q_{N}=\sqrt{2}\Psi_{n}^{T}(Q\otimes Q_{\perp})$.
Then, $\colspan(\Q)=\{\svec(VQ^{T}):V\in\R^{n\times r}\}$ and $\Q^{T}\Q=I_{d}$
where $d=nr-\frac{1}{2}r(r-1)$.
\end{lemma}

\begin{proof}
Without loss of generality, let $[Q,Q_{\perp}]=I_{n}$. We observe
that
\[
\Q\begin{bmatrix}\svec(H_{1})\\
\vector(H_{2})
\end{bmatrix}=\svec\left(\begin{bmatrix}H_{1} & \frac{1}{\sqrt{2}}H_{2}^{T}\\
\frac{1}{\sqrt{2}}H_{2} & 0
\end{bmatrix}\right)=\svec\left(\begin{bmatrix}H_{1}\\
\sqrt{2}H_{2}
\end{bmatrix}Q^{T}\right)
\]
holds for any arbitrary (possibly nonsymmetric) $H_{1}\in\R^{r\times r}$
and $H_{2}\in\R^{(n-r)\times r}$, and that $\|\Q h\|=\|h\|$ for
all $h\in\R^{d}$, so $\Q^{T}\Q=I_{d}$. 
\end{proof}
\begin{lemma}[Eigenvalue bounds on $\E$ and $\Sig$]\label{lem:eigen}
Given $W\in\S_{++}^{n}$, write $w_{i}\equiv\lambda_{i}(W)$
for all $i\in\{1,2,\dots,n\}$. Choose $r\ge1$ and $\tau=\frac{1}{2}w_{r+1}$,
and define $\E=E\skron E$ where $E=QQ^{T}+\tau^{-1}\cdot Q_{\perp}\Lambda_{\perp}Q_{\perp}^{T},$
and $\Sig=\diag(\Sig_{B},\Sig_{N})$ where $\Sig_{B}^{-1}=\Lambda\skron\Lambda-\tau^{2}I,$
and $\Sig_{N}^{-1}=(\Lambda-\tau I)\otimes\Lambda_{\perp}$. Then,
\[
\begin{array}{ccccccc}
4 & \ge & \lambda_{\max}(\E) & \ge & \lambda_{\min}(\E) & \ge & w_{n}^{2}/w_{r+1}^{2},\\
w_{r+1}^{2}/(w_{n}w_{r}) & \ge & \tau^{2}\cdot\lambda_{\max}(\Sig) & \ge & \tau^{2}\cdot\lambda_{\min}(\Sig) & \ge & w_{n}^{2}/(4w_{1}^{2}).
\end{array}
\]
\end{lemma}

\begin{proof}
\sloppy We have $\lambda_{\max}(E)=\max\{1,\lambda_{\max}(\Lambda_{\perp})/\tau\}=\max\{1,2w_{r+1}/w_{r+1}\}\le2$ and $\lambda_{\max}(\E)=\lambda_{\max}^{2}(E)$.
Similarly, $\lambda_{\min}(E)=\min\{1,\lambda_{\min}(\Lambda_{\perp})/\tau\}=\min\{1,2w_{n}/w_{r+1}\}\ge w_{n}/w_{r+1}$ and $\lambda_{\min}(\E)=\lambda_{\min}^{2}(E)$.
We have $\tau^{-2}\lambda_{\max}^{-1}(\Sig)\ge(\frac{1}{2}w_{r}w_{n})/(\frac{1}{2}w_{r+1})^{2}\ge w_{r}w_{n}/w_{r+1}^{2},$
because $\lambda_{\max}^{-1}(\Sig)=\min\{\lambda_{\max}^{-1}(\Sig_{B}),\lambda_{\max}^{-1}(\Sig_{N})\}$
and $\lambda_{\max}^{-1}(\Sig_{B})=\lambda_{\min}(\Lambda\skron\Lambda-\tau^{2}I)\ge w_{r}^{2}-\frac{1}{4}w_{r+1}^{2}\ge\frac{3}{4}w_{r}^{2},$
while $\lambda_{\max}^{-1}(\Sig_{N})=\lambda_{\min}[(\Lambda-\tau I)\otimes\Lambda_{\perp}]\ge(w_{r}-\frac{1}{2}w_{r+1})w_{n}\ge\frac{1}{2}w_{r}w_{n}.$
Finally, we have $\tau^{-2}\lambda_{\min}^{-1}(\Sig)\le w_{1}^{2}/(\frac{1}{2}w_{r+1})^{2}\le4w_{1}^{2}/w_{r+1}^{2},$
because $\lambda_{\min}^{-1}(\Sig)=\max\{\lambda_{\min}^{-1}(\Sig_{B}),\lambda_{\min}^{-1}(\Sig_{N})\}\le w_{1}^{2}$.
\end{proof}

\end{document}